\renewcommand{\subset}{\subseteq}
\newtheorem{theorem}{Theorem}[section]
\newtheorem{prop}[theorem]{Proposition}
\newtheorem{lemma}[theorem]{Lemma}
\newtheorem{maintheorem}{Theorem}
\numberwithin{equation}{section}
\newcommand{\gnorm}[1]{{\left\|\kern-0.24ex\left|{ #1 } \right|\kern-0.24ex\right\|}}
\DeclareMathOperator{\tr}{ tr }
\newcommand{\tiltau}{\widetilde{\tau}}
\newcommand{\gbar}{\overline{g}}
\newcommand{\hhat}{\widehat{h}}
\newcommand{\qhat}{\widehat{q}}
\newcommand{\fhat}{\widehat{f}}
\renewcommand{\hbar}{\overline{h}}
\newcommand{\bR}{\mathbb{R}}
\newcommand{\bB}{\mathbb{B}}
\newcommand{\bH}{\mathbb{H}}
\newcommand{\bN}{\mathbb{N}}
\newcommand{\sB}{\mathscr{B}}
\newcommand{\cQ}{\mathscr{Q}}
\newcommand{\fancy}{\mathscr{C}}
\newcommand{\fancym}{\mathscr{M}_{\mathrm{weak}}}
\newcommand{\wtil}{\widetilde{w}}
\newcommand{\grd}{g_0}
\newcommand{\bS}{\mathbb{S}}
\DeclareMathOperator{\Rc}{Rc}
\newcommand{\ghat}{\widehat{g}}
\newcommand{\Mbar}{\overline{M}}
\renewcommand{\)}{\textup{)}}
\newcommand{\defname}[1]{{\bfseries \itshape \boldmath #1}\/}
\begin{document}

\title{Low regularity Poincar\'e-Einstein metrics}


\author{Eric Bahuaud}
\address{Department of Mathematics,
Seattle University,
901 12th Ave,
Seattle, WA, 98122, United States}
\email{bahuaude(at)seattleu.edu}

\author{John M Lee}
\address{Department of Mathematics,
University of Washington,
Box 354350, 
Seattle, WA 98195-4350.}
\email{johnmlee(at)uw.edu}

\subjclass[2010]{Primary 53C21; Secondary 35B65, 35J57, 35J70, 53C25}

\date{\today.}



\begin{abstract}
We prove the existence of a $C^{1,1}$ conformally compact Einstein metric on the ball that has asymptotic sectional curvature decay to $-1$ plus terms of order $e^{-2r}$ where $r$ is the distance from any fixed compact set.  This metric has no $C^2$ conformal compactification.
\end{abstract}

\maketitle

\section{Introduction}

A complete, noncompact Riemannian manifold $(M,g)$ is said to be 
\defname{conformally compact} if $M$ is the interior of a compact
manifold with boundary $\overline M$,
and there is a nonnegative function $\rho\colon \Mbar\to \bR$ such that
$\rho>0$ in $M$, $\rho=0$ to first order on $\partial M$, and
$\overline g =\rho^2 g$ has a continuous extension to a metric
on $\overline M$. It is said to have a \defname{$C^{k,\alpha}$ conformal compactification}
if the extended metric is of class $C^{k,\alpha}$ on $\overline M$, and is
said to be \defname{asymptotically hyperbolic} if
its sectional curvatures approach $-1$ at infinity.

Ever since the early 1980s,
there has been considerable interest in asymptotically hyperbolic Einstein 
metrics (now usually called \defname{Poincar\'e--Einstein metrics})
for both mathematical and physical reasons.
Mathematically, they are connected with global
conformal invariants of compact Riemannian manifolds,
and physically, they appear in the AdS/CFT correspondence of string theory and
as initial hypersurfaces for Einstein's equations in general relativity,
especially in the study of gravitational radiation.

The usual definition of a conformally compact metric is an extrinsic one:
One assumes the existence of a compact manifold with boundary $\overline M$,
an embedding $M\hookrightarrow \overline M$ whose image is the interior of $\overline M$,
and a conformal factor $\rho$ such that $\rho^2 g$ has 
an extension to $\overline M$ with suitable regularity. But from a geometric
point of view, it is interesting to explore the question of how far these conditions 
are determined by the intrinsic geometry of $(M,g)$.  In particular, what 
conditions on the behavior of $(M,g)$ at infinity are sufficient to guarantee
that it has a conformal compactification? What do these conditions tell us about
the regularity of the compactification?

There are some easy necessary conditions.
Suppose $(M,g)$ is a complete, noncompact $(n+1)$-dimensional Riemannian manifold.
If $M$ is to admit any conformal compactification, it must first of all contain
an \defname{essential subset}: This is a compact $(n+1)$-dimensional submanifold $K$ with 
smooth boundary, such that the outward normal exponential map from $\partial K$ 
is a diffeomorphism onto $\overline{M\smallsetminus K}$.
Under this hypothesis, $M$ can be embedded in a smooth compact manifold with boundary
$\overline M$ (diffeomorphic to $K$).

For Poincar\'e--Einstein manifolds, there is another necessary condition 
based on curvature decay. Suppose $(M,g)$ is a Poincar\'e--Einstein manifold 
with scalar curvature
equal to $-n(n+1)$.  (This is the scalar curvature of the 
$(n+1)$-dimensional hyperbolic metric with sectional curvature $-1$.)
If $g$ has a $C^2$ conformal compactification, then its  
sectional curvatures approach $-1$ to order $e^{-2r}$, where $r$ is the distance from 
any fixed compact subset of $M$.  We refer to this curvature property as 
\defname{quadratic hyperbolic curvature decay} (QHCD).
A natural question is whether  every Poincar\'e--Einstein metric with QHCD
has  a $C^2$ conformal compactification.

There have been several positive results in this direction. 
The first author \cite{B} showed that 
if $(M,g)$ is a noncompact Riemannian manifold with an essential subset $K$
and sectional curvatures approaching $-1$ to order $e^{-\omega r}$ with $\omega > 1$, together 
with similar decay on the covariant derivative of the curvature,
then $g$ admits a $C^{0,1}$ conformal compactification.  For related results, see \cite{BG, Gicquaud, HQS, ST}.

In \cite{BG}, the first author and Romain Gicquaud addressed the special case of Einstein metrics,
and showed that every Poincar\'e--Einstein manifold with an essential subset and QHCD
has a $C^{1,\alpha}$ compactification for any $\alpha\in (0,1)$. On the other hand, in a subsequent paper \cite{Gicquaud},
Gicquaud remarked that 
it does not seem unreasonable 
to believe that there exist 
Poincar\'e--Einstein metrics with QHCD that
have no $C^2$ conformal compactification.

The purpose of this note is to prove that this belief is justified.
We restrict attention to the case $n\ge 3$, because conformally Einstein metrics 
in dimensions $2$ and $3$ are hyperbolic and always have $C^\infty$ compactifications.

\begin{maintheorem} \label{thm:main} 
For each $n\ge 3$, there exists a  conformally compact Einstein
metric with QHCD  on the  $(n+1)$-dimensional ball 
that has a $C^{1,1}$ conformal compactification but no $C^2$ 
conformal compactification.
\end{maintheorem}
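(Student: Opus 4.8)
The plan is to construct the example explicitly by perturbing the hyperbolic metric on the ball. The most natural ansatz is to look for a warped-product-type metric of the form $g = dr^2 + f(r)^2 g_{\bS^n}$ on $(0,\infty)\times \bS^n$, or more flexibly a metric whose conformal compactification $\bar g = \rho^2 g$ has a prescribed expansion in the boundary defining function $\rho = e^{-r}$ (at least asymptotically). For such metrics the Einstein condition $\Rc(g) = -n g$ reduces to an ODE (or, if one allows the boundary metric to be a general Einstein metric on $\bS^n$, a mildly more complicated system). One then engineers the solution so that the expansion of $\bar g$ in powers of $\rho$ contains a term of the form $\rho^2\log\rho$ (or $\rho^{2}$ times a function that is only Lipschitz in $\rho$), which is exactly the obstruction to $C^2$ regularity at the boundary while still being $C^{1,1}$.

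The key steps, in order, are as follows. First, I would set up the Einstein equation for the chosen ansatz in Fefferman–Graham-type normal form $\bar g = d\rho^2 + h_\rho$ with $h_\rho$ a curve of metrics on $\bS^n$, and recall that for $n$ even the expansion of $h_\rho$ generically contains a $\rho^n\log\rho$ term, while for the specific low-order behavior we want (QHCD, i.e.\ curvature $= -1 + O(e^{-2r}) = -1 + O(\rho^2)$) we instead want to force a $\rho^2\log\rho$ term regardless of parity of $n$; this is where the restriction $n\ge 3$ enters, since for $n\le 2$ the metric is forced to be hyperbolic. Second, I would solve the ODE/PDE system — for a rotationally symmetric ansatz this is a single second-order ODE for $f(r)$ — with initial data at $r=\infty$ (the boundary) chosen to activate the logarithmic term, and with smooth data at $r=0$ (the center of the ball) so that the metric closes up smoothly there; existence and uniqueness on all of $(0,\infty)$ follows from standard ODE theory plus a barrier/monotonicity argument to rule out finite-time blowup. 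Third, I would verify QHCD directly from the solved expansion: compute the sectional curvatures of $g$ in terms of $f$, $f'$, $f''$ and check that they equal $-1 + O(e^{-2r})$. Fourth, I would read off the regularity of $\bar g = e^{-2r} g$ near $\partial M$: the expansion $h_\rho = g_{\bS^n} + \rho^2(\cdots + c\,\rho^0\log\rho\,(\cdots)) + \cdots$ shows $\bar g \in C^{1,1}$ but $\bar g\notin C^2$ precisely because the $\rho^2\log\rho$ term has unbounded second $\rho$-derivative while its first derivative is bounded (Lipschitz). Finally, I would invoke the necessary-condition discussion from the introduction: since $g$ has QHCD it is not ruled out on curvature grounds, and the explicit non-smoothness of the optimal compactification — together with a uniqueness argument showing any other compactification is related to this one by a positive factor with the same regularity — shows there is no $C^2$ compactification at all.

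The main obstacle I expect is the last point: ruling out the existence of \emph{some other} $C^2$ conformal compactification, not just showing that the naturally constructed one fails to be $C^2$. Two distinct compactifications $\bar g$ and $\bar g'$ of the same $(M,g)$ differ by $\bar g' = \psi^2 \bar g$ where $\psi$ is a positive function on $\Mbar$, and a priori $\psi$ could be rough in the interior yet smooth up to the boundary in a way that cancels the bad $\rho^2\log\rho$ term. To close this gap I would use the fact that the logarithmic coefficient is a conformal invariant of the boundary data (analogous to the obstruction tensor / the log-term coefficient in the Fefferman–Graham expansion, which transforms tensorially under change of defining function), so no choice of $\psi$ can remove it; concretely, one shows that any compactification must have $\rho$ agreeing with $e^{-r}$ to the relevant order up to a smooth positive factor on the boundary, and then the invariance of the log coefficient forces the same failure of $C^2$ regularity. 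Making this rigorous — identifying the precise conformally invariant obstruction and its transformation law at this low order — is the technical heart of the argument; the ODE construction and the curvature estimates, by contrast, are essentially bookkeeping.
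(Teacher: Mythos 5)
Your plan has two fatal problems, both stemming from the decision to keep the boundary data smooth (round, or rotationally symmetric) and to locate the irregularity in the interior expansion. First, the rotationally symmetric ansatz $g=dr^2+f(r)^2 g_{\bS^n}$ with $\Rc(g)=-ng$ and smooth closure at the center of the ball forces $f(r)=\sinh r$, i.e.\ exact hyperbolic space; there is no freedom left to ``activate'' anything. Second, and more fundamentally, for a Poincar\'e--Einstein metric in Fefferman--Graham normal form $\bar g=d\rho^2+h_\rho$ with $n\ge 3$, the coefficient of $\rho^2$ in the expansion of $h_\rho$ is \emph{determined} by the boundary metric (it is $-P_{h_0}$, the Schouten tensor of $h_0$), and the indicial roots of the linearized problem are $0$ and $n$: logarithmic terms can first appear only at order $\rho^n$, never at order $\rho^2$. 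So a $\rho^2\log\rho$ term with smooth conformal infinity is incompatible with the Einstein equation, and the ``conformally invariant log coefficient at order two'' that your final step relies on does not exist. If the conformal infinity is smooth, the compactification is in fact far better than $C^2$ in this perturbative regime, so no example can be produced this way.

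The paper's construction puts the obstruction where it can actually live: in the boundary metric itself. It builds a one-parameter family of $C^{1,1}$ metrics $\ghat_\lambda$ on $\bS^n$, close to the round metric, whose Weyl tensor (for $n>3$) or Cotton tensor (for $n=3$) is discontinuous; since these are conformally invariant, no conformal multiple of $\ghat_\lambda$ in any coordinates is $C^2$. It then extends $\ghat_\lambda$ into the ball by an explicit regularized extension map, solves the gauged Einstein equation $Q(g,g_0)=0$ by the inverse function theorem on weighted H\"older spaces (adapting Graham--Lee to $C^{1,1}$ boundary data via the ``weakly asymptotically hyperbolic'' framework), and recovers the Einstein condition by a maximum principle. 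The resulting metric lies in a space guaranteeing a $C^{1,1}$ compactification and QHCD, and any hypothetical $C^2$ compactification would induce a $C^2$ conformal representative of the conformal infinity, contradicting the boundary construction. That last step is exactly the gap you correctly identified as the crux, but it is closed by a conformal invariant of the \emph{boundary} metric, not of the interior expansion. You would need to abandon the ODE/log-term strategy entirely and move the non-smoothness to the conformal infinity to make the argument work.
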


The proof of this theorem adapts the perturbative existence theorem of conformally compact Einstein metrics with prescribed conformal infinity on the ball by Graham and the second author \cite{GrahamLee}.  We begin by producing a one-parameter family 
of $C^{1,1}$ metrics on the boundary sphere that approach the standard round metric in $C^{1,1}$ norm.  The details of the construction ensure  that the regularity of the metric cannot be improved to $C^2$ by any coordinate or conformal change.  Next, by using the regularization technique and intermediate spaces introduced by Allen, Isenberg, Stavrov Allen and the second author in \cite{WAHM}, we  produce approximate solutions to the linearized Einstein equation with $C^{1,1}$ regularity.  By applying the inverse function theorem, we correct these approximate solutions to obtain actual Einstein metrics with the 
same conformal infinities. Finally, we show that the resulting Einstein metrics are in the class of 
``weakly asymptotically hyperbolic" metrics introduced in 
\cite{WAHM}, which implies that they have $C^{1,1}$ conformal compactifications.

This paper is structured as follows.  In the next section we describe the function spaces we will need in the subsequent analysis.  We extend the definitions and regularization procedure given in \cite{WAHM} to Lipschitz spaces.  In Section \ref{sec:extend} we describe our extension map from $C^{1,1}$ boundary metrics to weakly asymptotically hyperbolic metrics in the interior.  Section \ref{sec:C11-family} then takes up the construction of a one-parameter family of $C^{1,1}$ metrics that approach the standard round metric and possess no higher regularity.  In Section \ref{sec:einstein} we lay the foundation for the perturbation argument, and finally in Section \ref{sec:proof} we complete the proof of Theorem \ref{thm:main} by the inverse function theorem.

This work was supported by a grant from the Simons Foundation (\#426628, Eric Bahuaud).  The second author is happy to acknowledge the support of the 
Mathematical Sciences Research Institute and the Massachusetts Institute of
Technology during the time this work was being done.

\section{Analytic preliminaries}

\subsection{Function Spaces}
In this section we review the function spaces from \cite{Lee} and \cite{WAHM} that we will need in the subsequent analysis.  The main result of this section is the extension of the regularization procedure from \cite{WAHM} to Lipschitz spaces.

Consider the open unit ball $M = \bB^{n+1}$.  The function $\rho = \frac{1-|x|^2}{2}$ is a defining function for the sphere.  Choose the Euclidean metric as the background reference metric, hereafter denoted by $\hbar$, and denote the Poincar\'e metric by $h = \rho^{-2} \hbar$.

Following Chapter 2 of \cite{Lee}, we assume that we have covered $\overline{M}$ with a finite system of background coordinates, abbreviated by $(\theta, \rho) = (\theta^1,\dots,\theta^n,\rho)$ as in \cite{Lee}.  
Unless otherwise specified, we let Greek indices run from $1$ to $n$ and Latin ones from $1$ to $n+1$, with the understanding
that $\theta^{n+1} = \rho$. 

Let $B_r$ denote the ball of hyperbolic radius $r$ about the point $(0,1)$ in the upper-half space model of hyperbolic space.  Given any point $p_0 \in M$
with coordinates $(\theta_0, \rho_0) $ in some background chart, define 
a M\"obius parametrization $\Phi_{p_0}\colon  B_2 \to M$ by
\begin{equation*}
\Phi_{p_0}(x,y) = (\theta_0 + \rho_0 x, \rho_0 y).
\end{equation*}

Now choose (cf.~ \cite[Lemma 2.2]{Lee}) a countable collection of points $p_i$ so that $\{ \Phi_{p_i}(B_1)\}$ covers $M$ and the collection $\{ \Phi_{p_i}(B_2)\}$ is uniformly locally finite.  In short, we have covered the manifold by balls of a fixed intrinsic size and a M\"obius parametrization is an affine rescaling to a fixed ball where we perform computations.

Let $E$ denote a subbundle of the tensor bundle $T^{(r_1,r_2)}TM := (T M)^{\otimes r_1} \otimes (T^*M)^{\otimes r_2}$.  
Define the \defname{weight} of $E$ as $r:= r_2-r_1$.  In our application, $E$ is most frequently taken to be the bundle of symmetric covariant $2$-tensors, $\Sigma^2(M)$, for which $r=2$.

We will use two scales of H\"older spaces of sections of $E$ from \cite{WAHM}, which
we call plain and fancy.  
For $k \geq 0$ and $\alpha \in [0,1]$, the \defname{plain H\"older spaces} $C^{k,\alpha}(M; E)$ are defined by the norm
\[ \| u \|_{C^{k,\alpha}(M)} := \sup_{i} \| \Phi_i^* u \|_{C^{k,\alpha}(B_2)}, \]
where the H\"older norm on $B_2$ is taken with respect to the Euclidean metric in coordinates.  We also define a weighted norm $\|u\|_{C^{k,\alpha}_{\delta}(M)} = \| \rho^{-\delta} u \|_{C^{k,\alpha}(M)}$.  
It is on the $C^{k,\alpha}_\delta(M)$ spaces for $\alpha\in(0,1)$ 
that we have the Fredholm theorems from \cite{Lee}.  We frequently omit the target bundle in the notation.

The \defname{fancy H\"older spaces} $\fancy^{k,\alpha;m}(M;  E)$ of sections of  a tensor bundle $E$ of weight $r$ are defined 
as in \cite{WAHM}
by the norm 
\[ \gnorm{u}_{k,\alpha;m} := \sum_{l = 0}^m \| \overline{\nabla}^l u \|_{C^{k-l,\alpha}_{r+l}(M)}, \]
for $0 \leq m \leq k$,  where $\overline{\nabla}$ denotes the covariant derivative with respect to the background metric $\hbar$.  

Finally, we will also require H\"older/Lipschitz spaces of tensors on the compactification $\Mbar$.  Let $E$ denote a tensor bundle over $(\Mbar,\hbar)$.  For $k \in \bN_0$, $\alpha \in [0,1]$, let $C^{k,\alpha}(\Mbar; E)$ denote the usual H\"older space of tensor fields on $(\Mbar,\hbar)$.  In particular, observe that $C^{m-1,1}(\Mbar;E)$ is the space of sections with uniformly Lipschitz continuous derivatives to order $m-1$.

It is easy to check that equivalent norms result, and hence these spaces are unchanged,
if we replace the background metric $\overline h$ by any other
smooth metric $\overline g$ on the closed ball, and the hyperbolic metric by $g = \rho^{-2}\overline g$. We will occasionally
use this freedom to simplify some of the arguments.

We document a few facts about these spaces for the convenience of the reader.  

\begin{lemma}[Lemma 2.1 of \cite{WAHM}] \label{lemma:tensor-mapping-plain-Holder}\  
\begin{enumerate}
\item
If $E$ is a tensor bundle of weight $r$ over $\big(\overline M,\overline h\big)$,
then the following inclusion is continuous for any 
$k \in \bN_0$ and $\alpha \in (0,1)$:
\begin{equation*}
C^{k,\alpha}\big(\overline M;E\big) \hookrightarrow C^{k,\alpha}_r(M;E).
\end{equation*}
In particular, this means that any smooth vector field on $\overline M$
restricts to an element of $C^{k,\alpha}_{-1}(M;TM)$, and any smooth $1$-form on $\overline M$ 
restricts to an element of  
$C^{k,\alpha}_1(M;T^*M)$.
\item
For any 
$\mu, \mu' \in \bR$ and any tensor bundles $E_1$, $E_2$,
pointwise tensor product induces a continuous map
\begin{align} \label{eqn:tensor-mapping-plain-Holder}
C^{k,\alpha}_{\mu}(M; E_1) \times C^{k,\alpha}_{\mu'}(M; E_2) \longrightarrow C^{k,\alpha}_{\mu+\mu'}(M; E_1 \otimes E_2).
\end{align}
\end{enumerate}
\end{lemma}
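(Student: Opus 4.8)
The plan is to reduce both assertions to elementary, \emph{uniform} estimates on the single fixed ball $B_2$, where they become standard facts about Euclidean H\"older norms.

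For part~(a), let $u$ be a $C^{k,\alpha}$ section of a tensor bundle $E$ of weight $r=r_2-r_1$ over $\overline M$, and recall that $\|u\|_{C^{k,\alpha}_r(M)}=\sup_i\|\Phi_i^*(\rho^{-r}u)\|_{C^{k,\alpha}(B_2)}$. The key computation is a scaling identity: since $\Phi_{p_i}(x,y)=(\theta_i+\rho_i x,\rho_i y)$ is affine with Jacobian $\rho_i\cdot\mathrm{Id}$ in the background coordinates $(\theta,\rho)$, the components of $\Phi_i^*u$ in the coordinates on $B_2$ are obtained from the background-coordinate components $u^I_J$ of $u$ by multiplying by $\rho_i^{r_2}$ (one factor $\rho_i$ per covariant slot) times $\rho_i^{-r_1}$ (one factor $\rho_i^{-1}$ per contravariant slot) and composing with $\Phi_i$; that is, $\Phi_i^*u=\rho_i^{\,r}\,(u^I_J\circ\Phi_i)$ componentwise. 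Since $\Phi_i^*\rho=\rho_i y$, the powers of $\rho_i$ cancel and $\Phi_i^*(\rho^{-r}u)=y^{-r}(u^I_J\circ\Phi_i)$. On $B_2$ the coordinate $y$ lies between two positive universal constants and $y^{-r}$ has all derivatives bounded independently of $i$, so multiplication by $y^{-r}$ is bounded on $C^{k,\alpha}(B_2)$ with a universal constant; and because $\Phi_i$ is affine with Lipschitz constant $\rho_i\le 1$ (shrinking the charts if necessary), precomposition with $\Phi_i$ does not increase the $C^{k,\alpha}(B_2)$ norm, since $D^j(u^I_J\circ\Phi_i)=\rho_i^{\,j}\,(D^j u^I_J)\circ\Phi_i$ and likewise for the top H\"older seminorm. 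Combining these, $\|\Phi_i^*(\rho^{-r}u)\|_{C^{k,\alpha}(B_2)}\le C\sum_{I,J}\|u^I_J\|_{C^{k,\alpha}(\overline M)}\le C\|u\|_{C^{k,\alpha}(\overline M;E)}$ with $C$ independent of $i$, and taking the supremum over $i$ gives the claimed continuous inclusion. The two ``in particular'' statements then follow because $TM$ has weight $-1$, $T^*M$ has weight $1$, and any smooth tensor field on $\overline M$ lies in $C^{k,\alpha}(\overline M;E)$ for every $k$ and $\alpha$.

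For part~(b), I would first note $\rho^{-(\mu+\mu')}(u\otimes v)=(\rho^{-\mu}u)\otimes(\rho^{-\mu'}v)$, which reduces the statement to the weightless case: pointwise tensor product is continuous $C^{k,\alpha}(M;E_1)\times C^{k,\alpha}(M;E_2)\to C^{k,\alpha}(M;E_1\otimes E_2)$. Pulling back by the coordinate change $\Phi_i$ commutes with tensor products, so $\Phi_i^*(u\otimes v)=(\Phi_i^*u)\otimes(\Phi_i^*v)$, and the components of the right side on $B_2$ are products of components of $\Phi_i^*u$ with components of $\Phi_i^*v$. Since $B_2$ is a fixed bounded domain, $C^{k,\alpha}(B_2)$ is a Banach algebra under pointwise multiplication with a constant depending only on $k$, $\alpha$, and $n$; hence $\|\Phi_i^*(u\otimes v)\|_{C^{k,\alpha}(B_2)}\le C\,\|\Phi_i^*u\|_{C^{k,\alpha}(B_2)}\,\|\Phi_i^*v\|_{C^{k,\alpha}(B_2)}\le C\,\|u\|_{C^{k,\alpha}(M)}\,\|v\|_{C^{k,\alpha}(M)}$, and again I take the supremum over $i$.

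The only step that requires real care is the weight bookkeeping in part~(a): one must verify that the factor $\rho_i^{\,r}$ produced by the Jacobian of the M\"obius map cancels \emph{exactly} the factor $\rho_i^{-r}$ coming from the weight $\rho^{-r}$ in the definition of $C^{k,\alpha}_r$, which is precisely the reason the weight of $E$ is defined to be $r_2-r_1$. Everything else is a uniform-in-$i$ H\"older estimate on the fixed ball $B_2$; because the M\"obius parametrizations have been normalized so that $\Phi_i$ affinely rescales $B_2$ to a set of fixed intrinsic size, every constant is automatically independent of $i$, and the uniform local finiteness of the cover plays no role in these particular estimates.
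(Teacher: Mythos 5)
Your proof is correct. The paper gives no proof of this lemma---it is quoted from Lemma 2.1 of \cite{WAHM}---and your argument (the exact cancellation of the Jacobian factor $\rho_i^{\,r}$ of the M\"obius parametrization against the weight $\rho^{-r}$, followed by uniform multiplication and composition estimates on the fixed ball $B_2$) is precisely the standard one used for this statement in the cited source.
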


The next lemma allows us to detect when a tensor in a plain H\"older space vanishes at the boundary.  See Lemma 3.7 of \cite{Lee} and Lemma 2.1 of \cite{WAHM}.

\begin{lemma} Let $E$ be a geometric tensor bundle of weight $r$ 
over $(\Mbar, \hbar)$. For $k \in \bN_0$, $\alpha \in (0,1)$, if $s = k+\alpha+r$, 
there is a continuous inclusion
\[ C^{k,\alpha}_{s}(M; E) \hookrightarrow C^{k,\alpha}(\Mbar; E). \]
As a consequence, if $s > k+\alpha+r$, 
then every section in $C^{k,\alpha}_{s}(M; E)$ has a continuous extension
to $\overline M$ that vanishes on the boundary.
\end{lemma}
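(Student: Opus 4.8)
The plan is to work in the finite system of background coordinates $(\theta,\rho)$ on $\Mbar$ and estimate the coordinate components $u_I$ of $u$, together with their coordinate derivatives through order $k$, directly up to $\partial M$. Since these charts are smoothly compatible and $E$ is a geometric tensor bundle (so its transition functions over the finite atlas of $\Mbar$ are smooth), controlling the $u_I$ uniformly in each such chart suffices, and away from $\partial M$ the weighted norm agrees locally with the ordinary one so there is nothing to prove. The heart of the matter is a scaling dictionary for the M\"obius parametrizations $\Phi_i$ with base height $\rho_i$: one has $\rho\circ\Phi_i = \rho_i y$ with $y$ confined to a fixed interval bounded away from $0$ and $\infty$ on $B_2$, so $(\rho\circ\Phi_i)^{\pm a}/\rho_i^{\pm a}$ has all $(x,y)$-derivatives bounded uniformly in $i$; the differential of the affine map $\Phi_i$ is $\rho_i$ times the identity, so a section of $E$ of weight $r=r_2-r_1$ satisfies $(\Phi_i^* u)_I = \rho_i^{\,r}(u_I\circ\Phi_i)$ in the background coframe, while the background derivative $\partial_{\theta^a}$ pulls back to $\rho_i^{-1}\partial_{x^a}$. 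Feeding these into the definitions turns membership in $C^{k,\alpha}_s(M;E)$ into the uniform estimate
\[ \|u_I\circ\Phi_i\|_{C^{k,\alpha}(B_2)}\le C\,\rho_i^{\,s-r}\,\|u\|_{C^{k,\alpha}_s(M)}\quad\text{for all }i, \]
with $C=C(k,\alpha,n)$.

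Now take $s=k+\alpha+r$, so $s-r=k+\alpha$. Given $p\in M$, pick $i$ with $p\in\Phi_i(B_1)$; then $\rho(p)\sim\rho_i$ and $p=\Phi_i(\tilde p)$ with $\tilde p$ at a definite distance from $\partial B_2$, so pulling back, differentiating, and rescaling via the dictionary gives $|\partial_\theta^\beta u_I(p)|\le C\rho(p)^{\,k+\alpha-|\beta|}\|u\|_{C^{k,\alpha}_s(M)}$ for $|\beta|\le k$; in particular all these derivatives are bounded on $M$, and those of order below $k$ have bounded first derivatives, hence are Lipschitz. For the $\alpha$-H\"older estimate on the order-$k$ derivatives up to $\partial M$, fix $p,q\in M$. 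If they lie in a common ball $\Phi_i(B_2)$ --- which, by the geometry of the covering (the $\Phi_j(B_1)$ cover $M$, and the larger $\Phi_j(B_2)$ leave room) together with the Lipschitz bound on $\rho$, occurs whenever $d_{\hbar}(p,q)$ is at most a fixed fraction of $\max(\rho(p),\rho(q))$ --- pull back to $B_2$, apply the displayed bound, and track powers of $\rho_i$: differentiating $k$ times contributes $\rho_i^{-k}$, the norm bound contributes $\rho_i^{\,k+\alpha}$, and replacing $(d_{\hbar}(p,q)/\rho_i)^\alpha$ by $d_{\hbar}(p,q)^\alpha$ contributes $\rho_i^{-\alpha}$; these cancel exactly, leaving $|\partial_\theta^\beta u_I(p)-\partial_\theta^\beta u_I(q)|\le C\|u\|_{C^{k,\alpha}_s(M)}\,d_{\hbar}(p,q)^\alpha$. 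In the complementary case $p$ and $q$ cannot be joined inside a single M\"obius ball, and the same geometry forces $d_{\hbar}(p,q)$ to dominate both $\rho(p)$ and $\rho(q)$ up to a fixed constant, so the pointwise bound $|\partial_\theta^\beta u_I(p)|\le C\rho(p)^\alpha\|u\|\le C\,d_{\hbar}(p,q)^\alpha\|u\|$ and the analogous one at $q$ close the estimate via the triangle inequality. Assembling the component bounds over the finite atlas yields $u\in C^{k,\alpha}(\Mbar;E)$ with norm controlled by $\|u\|_{C^{k,\alpha}_s(M)}$, the asserted inclusion. For the consequence, when $s>k+\alpha+r$ the same computation gives the displayed estimate with $s-r>k+\alpha$, so every step above survives with a strictly positive surplus power of $\rho_i\le C$; thus $u$ again extends to $C^{k,\alpha}(\Mbar;E)$, and since $|u_I(p)|\le C\rho(p)^{\,s-r}\|u\|$ with $s-r>0$, the extension vanishes identically on $\partial M$.

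The step I expect to be the main obstacle is the H\"older estimate for the order-$k$ derivatives near $\partial M$: one must verify that the rescaling factors cancel precisely and, for a pair of points not sharing a M\"obius ball, nail down the elementary but fiddly geometric fact that two points of $M$ are then separated in the background metric by at least a fixed fraction of their defining-function heights --- exactly the place where the gap between the covering balls $\Phi_i(B_1)$ and the computation balls $\Phi_i(B_2)$ is used. Everything else is the bookkeeping of the scaling dictionary, which is routine once it is set up.
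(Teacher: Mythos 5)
Your argument is correct. Note that the paper itself offers no proof of this lemma --- it simply points to Lemma 3.7 of \cite{Lee} and Lemma 2.1 of \cite{WAHM} --- so there is nothing to compare against line by line; what you have written is essentially the standard rescaling proof that underlies those citations. The scaling dictionary is right: $(\Phi_i^*u)_I=\rho_i^{\,r}(u_I\circ\Phi_i)$, $\rho\circ\Phi_i=\rho_i y$ with $y$ pinched on $B_2$, and $\partial_{x^a}(u_I\circ\Phi_i)=\rho_i\,(\partial_{\theta^a}u_I)\circ\Phi_i$, which converts the weighted norm into $\|u_I\circ\Phi_i\|_{C^{k,\alpha}(B_2)}\le C\rho_i^{\,s-r}\|u\|_{C^{k,\alpha}_s(M)}$; with $s-r=k+\alpha$ the powers of $\rho_i$ in the H\"older seminorm of the top-order derivatives cancel exactly, and the far-apart case is correctly reduced to the pointwise decay $|\partial^\beta_\theta u_I(p)|\le C\rho(p)^\alpha\|u\|$ together with the elementary observation that two points not sharing a M\"obius ball satisfy $d_{\hbar}(p,q)\gtrsim\max(\rho(p),\rho(q))$. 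The passage to the boundary (uniformly continuous derivatives on the open ball extend to the closed ball) and the vanishing statement for $s>k+\alpha+r$ are both handled. The only cosmetic gap is that a pair $p,q$ need not lie in a single background chart, but for close pairs this is arranged by the finiteness of the atlas and for distant pairs only the sup bound is needed, so nothing breaks.
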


We now document some important properties of the fancy H\"older spaces.

\begin{lemma}[Parts of Lemma 2.3 of \cite{WAHM}] \label{lemma:properties}
Suppose $\alpha \in [0,1)$ and $0 \leq m \leq k$.
\begin{enumerate}
\item\label{part:regular-in-fancy}
For $0\le m\le k$ and $\alpha\in[0,1)$,
we have $C^{k,\alpha}_{2+m}(M,\Sigma^2(M)) \subset \fancy^{k,\alpha;m}(M,\Sigma^2(M))$.
\item \label{lemma:embedding-Lipschitz} For $1 \leq m \leq k$, the following inclusion
is continuous:
\[ \fancy^{k,\alpha;m}(M) \hookrightarrow C^{m-1,1}(\Mbar). \]

\item \label{lemma:properties-ctyofder} 
The following maps are continuous:
\begin{align*}
\overline{\nabla}: &\fancy^{k,\alpha;m}(M) \to \fancy^{k-1,\alpha;m-1}(M),\\
M_\rho: &\fancy^{k,\alpha;m}(M)\to \fancy^{k,\alpha;m+1}(M),
\end{align*}
where $M_\rho$ represents 
multiplication by $\rho$.
\end{enumerate}
\end{lemma}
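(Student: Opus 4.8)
The plan is to deduce all three parts from the scaling behavior of the M\"obius parametrizations $\Phi_i$, following the proof of \cite[Lemma 2.3]{WAHM} but retaining only $C^0$-level information wherever a H\"older exponent would enter, so that nothing changes when $\alpha=0$. First I would assemble a small toolkit. Because the $\Phi_i$ are chosen (cf.\ \cite[Lemma 2.2]{Lee}) so that $\Phi_i^* h$ is uniformly $C^\infty$-comparable to the standard hyperbolic metric on $B_2$, the pullback $\Phi_i^*\hbar=(\rho_i y)^2\,\Phi_i^* h$ is $\rho_i^2$ times a metric on $B_2$ whose $C^\infty(B_2)$-norm is bounded independently of $i$; hence the Christoffel symbols of $\Phi_i^*\hbar$ in the $(x,y)$ coordinates are uniformly bounded. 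Writing, schematically, $\Phi_i^*(\overline\nabla u)=\partial(\Phi_i^* u)+\Gamma_i*\Phi_i^* u$ and tracking the factor relating $\|\Phi_i^* u\|_{C^{k,\alpha}(B_2)}$ to the local contribution to $\|u\|_{C^{k,\alpha}_\delta(M)}$, this yields the basic estimate that $\overline\nabla\colon C^{k,\alpha}_\delta(M;E)\to C^{k-1,\alpha}_\delta(M;E')$ is continuous for every real $\delta$ and every pair of tensor bundles (only the exponent $\delta$ is tracked; the change of tensor weight is immaterial). I would also record that multiplication by $\rho^s$ is bounded on $C^{k,\alpha}(M)$ for $s\ge 0$ (since $\Phi_i^*(\rho^s)=\rho_i^s y^s$ with $\rho_i\le C$ and $y\sim 1$ on $B_2$), so that $C^{k,\alpha}_{\delta'}(M)\hookrightarrow C^{k,\alpha}_\delta(M)$ whenever $\delta\le\delta'$; and that $\rho\in C^{k,\alpha}_1(M)$ (because $\rho^{-1}\cdot\rho=1\in C^{k,\alpha}(M)$), while each $\overline\nabla^j\rho$ with $j\ge 1$, being a smooth tensor of weight $j$ on $\overline M$, lies in $C^{k,\alpha}_j(M)$ by Lemma \ref{lemma:tensor-mapping-plain-Holder}\,(a).

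Part \ref{part:regular-in-fancy} then follows by iterating the derivative estimate: for $u\in C^{k,\alpha}_{2+m}(M;\Sigma^2(M))$ one gets $\overline\nabla^l u\in C^{k-l,\alpha}_{2+m}(M)$ with norm $\lesssim\|u\|_{C^{k,\alpha}_{2+m}}$, and since $2+l\le 2+m$ the weighted inclusion bounds $\|\overline\nabla^l u\|_{C^{k-l,\alpha}_{2+l}}$; summing over $0\le l\le m$ gives $\gnorm{u}_{k,\alpha;m}\lesssim\|u\|_{C^{k,\alpha}_{2+m}}$. For the first map of part \ref{lemma:properties-ctyofder} the bound is immediate from the definition of the fancy norm: $\gnorm{\overline\nabla u}_{k-1,\alpha;m-1}=\sum_{l=0}^{m-1}\|\overline\nabla^{l+1}u\|_{C^{k-1-l,\alpha}_{r+1+l}}$, and reindexing $j=l+1$ identifies this sum with $\sum_{j=1}^m\|\overline\nabla^j u\|_{C^{k-j,\alpha}_{r+j}}\le\gnorm{u}_{k,\alpha;m}$. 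For $M_\rho$ I would expand $\overline\nabla^l(\rho u)$ by the Leibniz rule as a bounded combination of terms $\overline\nabla^j\rho\otimes\overline\nabla^{l-j}u$ with $0\le j\le l\le m+1$: each factor $\overline\nabla^j\rho$ lies in $C^{k,\alpha}_{\max(j,1)}(M)$ — the gain from weight $0$ to exponent $1$ at $j=0$ being exactly what supplies the missing unit of decay — and $\overline\nabla^{l-j}u\in C^{k-(l-j),\alpha}_{r+l-j}(M)$ whenever $l-j\le m$, while the lone exceptional term $\rho\otimes\overline\nabla^{m+1}u$ is dealt with by writing $\overline\nabla^{m+1}u=\overline\nabla(\overline\nabla^m u)\in C^{k-m-1,\alpha}_{r+m}(M)$. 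The tensor product estimate \eqref{eqn:tensor-mapping-plain-Holder}, together with the inclusions $C^{k',\alpha}_\delta\subset C^{k'',\alpha}_\delta$ for $k''\le k'$, then places every term in $C^{k-l,\alpha}_{r+l}(M)$, giving $\gnorm{\rho u}_{k,\alpha;m+1}\lesssim\gnorm{u}_{k,\alpha;m}$.

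The main obstacle will be part \ref{lemma:embedding-Lipschitz}. The key observation is that for a tensor bundle $E$ of weight $r'$ the weighted space $C^0_{r'}(M;E)$ — the exponent being equal to the weight — is exactly the space of sections whose components in the background coordinates are bounded on $M$, since $\Phi_i^*(\rho^{-r'}T)$ has $(x,y)$-components equal to $y^{-r'}$ times the background components of $T\circ\Phi_i$, with $y\sim 1$ on $B_2$ and the $\Phi_i(B_1)$ covering $M$. Granting this, let $u\in\fancy^{k,\alpha;m}(M)$ with $1\le m\le k$. For $0\le l\le m$ one has $\overline\nabla^l u\in C^{k-l,\alpha}_{r+l}(M)\subset C^0_{r+l}(M)$, and $r+l$ is precisely the weight of $\overline\nabla^l u$; hence $u,\overline\nabla u,\dots,\overline\nabla^m u$ all have background components bounded on $M$, with sup-norms $\lesssim\gnorm{u}_{k,\alpha;m}$. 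Since the Christoffel symbols of $\hbar$ are bounded up to $\partial M$, the relation $\partial(\overline\nabla^l u)=\overline\nabla^{l+1}u+\Gamma*\overline\nabla^l u$ (schematically) shows that for $0\le l\le m-1$ the ordinary first partial derivatives of $\overline\nabla^l u$ are bounded on $M$; as $M=\bB^{n+1}$, each such $\overline\nabla^l u$ is then Lipschitz and extends continuously to $\overline M$, with $\overline\nabla^{m-1}u$ Lipschitz. A routine triangular argument — solving for $\partial^\beta u$ in terms of $\{\overline\nabla^j u\}_{j\le|\beta|}$ and the smooth Christoffel symbols — converts these statements into $u\in C^{m-1}(\overline M)$ together with a Lipschitz bound on $\partial^\beta u$ for $|\beta|=m-1$, i.e.\ $u\in C^{m-1,1}(\overline M)$, with all estimates linear in $\gnorm{u}_{k,\alpha;m}$. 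The delicate point is the very first observation: the weights defining $\fancy^{k,\alpha;m}$ are calibrated so that each covariant derivative $\overline\nabla^l u$, $l\le m$, is not merely decaying but bounded up to $\partial M$, and it is this boundedness that promotes the interior M\"obius-scale regularity to genuine Lipschitz regularity on $\overline M$; since only $C^0$ bounds enter, the case $\alpha=0$ requires no change.
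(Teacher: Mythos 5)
Your argument is correct; note that the paper itself gives no proof of this lemma, importing it verbatim from \cite{WAHM}, and your reconstruction follows exactly the route that source takes (M\"obius-scale estimates showing $\overline\nabla$ preserves the exponent $\delta$, the weighted inclusions $C^{k,\alpha}_{\delta'}\hookrightarrow C^{k,\alpha}_{\delta}$ for $\delta\le\delta'$, the Leibniz expansion with $\rho\in C^{k,\alpha}_1$ and $\overline\nabla^j\rho\in C^{k,\alpha}_j$ for $M_\rho$, and the observation that exponent equal to weight means bounded background components for the embedding into $C^{m-1,1}(\Mbar)$). Your explicit handling of the exceptional term $\rho\otimes\overline\nabla^{m+1}u$ and your care in keeping everything at the $C^0$ level so that $\alpha=0$ is covered are exactly the points that need attention, and both are handled correctly.
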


The next lemma shows how to detect whether a tensor is in the fancy H\"older spaces by 
looking purely at components in background coordinates.

\begin{lemma} \label{lemma:fancy-inclusion-criterion}
Suppose that $\tau = \tau_J^I d\theta^J \otimes \partial_{\theta^I}$ is a tensor field supported in the domain of a background coordinate chart $(\theta, \rho)$. Then 
\begin{equation*}
\tau \in \fancy^{k,\alpha;m}(M) \Longleftrightarrow \tau_J^I \in \fancy^{k,\alpha;m}(M) \text{ for all $I,J$}.
\end{equation*}
\end{lemma}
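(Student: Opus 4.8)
The plan is to reduce the statement to a purely local, coordinate-level assertion and then verify both directions of the equivalence by comparing the intrinsic definition of $\fancy^{k,\alpha;m}$ with what it says about components in a fixed background chart. Since $\tau$ is supported in a single chart, every quantity in sight---the M\"obius parametrizations $\Phi_i$ that meet the support, the background connection coefficients $\overline\Gamma^i_{jk}$, and the defining function $\rho$---can be taken to live in that chart, and all the H\"older norms are sums over finitely many relevant $\Phi_i^*$.

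For the forward direction ($\Rightarrow$), suppose $\tau \in \fancy^{k,\alpha;m}(M)$, i.e.\ $\overline\nabla^l \tau \in C^{k-l,\alpha}_{r+l}(M)$ for $0 \le l \le m$, where $r = r_2 - r_1$ is the weight of the bundle. I would argue by induction on $m$. The scalar component functions $\tau^I_J$ are obtained by contracting $\tau$ against the coordinate frame $\partial_{\theta^I}$ and coframe $d\theta^J$; by Lemma~\ref{lemma:tensor-mapping-plain-Holder}(a) these coordinate frame fields lie in the appropriate weighted plain H\"older spaces over $(\Mbar,\hbar)$ (weight $-1$ for each $\partial_{\theta^i}$, weight $+1$ for each $d\theta^j$), so by the product rule Lemma~\ref{lemma:tensor-mapping-plain-Holder}(b) the contraction lands $\tau^I_J$ in $C^{k,\alpha}_{r - (r)}(M) = C^{k,\alpha}_0(M)$. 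Hmm---more precisely, a component of a weight-$r$ tensor is a weight-$0$ object, but to run the fancy-space definition I must also control its background covariant derivatives. Here the key point is that $\overline\nabla(\tau^I_J) = (\overline\nabla \tau)(\cdots) + (\text{terms with } \overline\Gamma)$, and the Christoffel symbols of a smooth metric on $\Mbar$ in these coordinates are smooth up to the boundary, hence in every $C^{k,\alpha}_0(M)$ by Lemma~\ref{lemma:tensor-mapping-plain-Holder}(a). Iterating, each $\overline\nabla^l$ applied to $\tau^I_J$ is a finite sum of contractions of $\overline\nabla^{l'}\tau$ ($l' \le l$) against smooth-up-to-boundary coordinate data, so $\overline\nabla^l(\tau^I_J) \in C^{k-l,\alpha}_{l}(M)$, which is exactly the condition (applied to a weight-$0$ bundle, so that $r+l$ becomes $0+l = l$) for $\tau^I_J \in \fancy^{k,\alpha;m}(M)$.

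For the reverse direction ($\Leftarrow$), suppose each component function $\tau^I_J \in \fancy^{k,\alpha;m}(M)$. Then $\tau = \sum \tau^I_J\, d\theta^J \otimes \partial_{\theta^I}$ is a finite sum, and it suffices to show that multiplying a fancy-space scalar by the fixed tensor field $d\theta^J \otimes \partial_{\theta^I}$ preserves membership in $\fancy^{k,\alpha;m}$ with the correct weight. This follows the same way: expand $\overline\nabla^l\big(\tau^I_J\, d\theta^J \otimes \partial_{\theta^I}\big)$ by Leibniz, observe that each term is $(\overline\nabla^{l'}\tau^I_J)$ times a covariant-derivative expression in the smooth coordinate frame fields (themselves in all the plain weighted spaces of the appropriate weight by Lemma~\ref{lemma:tensor-mapping-plain-Holder}(a)), and apply the product map Lemma~\ref{lemma:tensor-mapping-plain-Holder}(b) to land in $C^{k-l,\alpha}_{r+l}(M)$. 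Summing over $0 \le l \le m$ gives $\tau \in \fancy^{k,\alpha;m}(M)$.

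The main obstacle---really the only subtlety---is bookkeeping the weights correctly: one must be careful that the ``weight'' in the fancy-space norm is the intrinsic bundle weight $r = r_2 - r_1$, which changes when one passes from the full tensor $\tau$ (weight $r$) to its scalar components (weight $0$), and that the coordinate frame fields $\partial_{\theta^i}$ and $d\theta^i$ carry weights $-1$ and $+1$ under the conventions of Lemma~\ref{lemma:tensor-mapping-plain-Holder}(a) so that the weights balance in $d\theta^J \otimes \partial_{\theta^I}$ and in each contraction. Once the weight accounting is pinned down, everything reduces to the two parts of Lemma~\ref{lemma:tensor-mapping-plain-Holder} plus the elementary fact that covariant differentiation with respect to a metric smooth up to $\partial M$ only introduces coefficients smooth up to $\partial M$; I would also remark that one may replace $\hbar$ by any convenient smooth background metric, as noted in the excerpt, which can be used to diagonalize or simplify the Christoffel terms if desired.
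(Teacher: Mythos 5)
Your proof is correct, but it takes a different route from the paper's. The paper's proof exploits the observation (made just before Lemma~\ref{lemma:tensor-mapping-plain-Holder}) that the spaces are unchanged if $\hbar$ is replaced by any smooth metric on $\Mbar$: choosing $\hbar$ to be Euclidean in the given background coordinates kills the Christoffel symbols on the support of $\tau$, so that $\overline\nabla^l\tau$ is literally the array of coordinate derivatives $\partial^l\tau^I_J$ and the equivalence reduces to comparing the norm of a tensor with the norms of its components, the weight subscripts matching because the fancy norm builds in the bundle weight. You instead keep a general background metric and run a Leibniz/contraction argument against the coordinate frame and coframe, using Lemma~\ref{lemma:tensor-mapping-plain-Holder}(a) to place $d\theta^i$, $\partial_{\theta^j}$, and the (cut-off) Christoffel data in the correct weighted spaces and Lemma~\ref{lemma:tensor-mapping-plain-Holder}(b) to multiply; the weights balance automatically because every object in sight sits in the plain weighted space whose subscript equals its bundle weight. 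This is longer but more self-contained, since the paper's WLOG step itself rests on exactly the kind of Leibniz bookkeeping you carry out explicitly. Two small points to tidy up: the coordinate frame fields are only defined on the chart, so you should cut them off by a function equal to $1$ on $\operatorname{supp}\tau$ before invoking Lemma~\ref{lemma:tensor-mapping-plain-Holder}(a); and that lemma is stated for $\alpha\in(0,1)$, whereas the fancy spaces here allow $\alpha\in[0,1)$ (and the Lipschitz endpoint elsewhere), so you should note that the inclusion and product properties extend to those endpoints by the same elementary estimates.
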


\begin{proof}
Since the fancy H\"older spaces are independent of the smooth
metric $\overline h$, we can without loss of generality assume that
$\overline h$ restricts to the Euclidean metric $(d\theta^1)^2 + \dots + (d\theta^n)^2 + d\rho^2$
in background coordinates 
on the support of $\tau$.  
In this case, the $\overline h$-covariant derivatives of $\tau$ are simply coordinate derivatives of its 
coefficients. 
The lemma now follows by simply comparing the norms of tensors and their component functions and noting  that the definition of the norm on $\fancy^{k,\alpha;m}$ includes the correct tensor weight.
\end{proof}

For the Lipschitz spaces $C^{k,1}_s(M)$, the next lemma gives an alternative
characterization in terms of background coordinates.

\begin{lemma}\label{lemma:plain-lipschitz}
Suppose $\tau = \tau_J^I d\theta^J \otimes \partial_{\theta^I}$ is a tensor field of weight $r$ supported in 
the domain of a background coordinate chart $(\theta, \rho)$.
Then $\tau\in C^{k,1}_s(M)$ if and only if 
each $\tau^I_J$ has $L^\infty$ 
partial derivatives up through order $k+1$, and
all of the following expressions are bounded:
\begin{equation}\label{eq:rho-s+jpartial}
\rho^{-s+r+j} \partial_{\theta^{i_1}}\dots \partial _{\theta^{i_j}}\tau^I_J,\quad 0\le j \le k+1,\quad  1\le i_s \le n+1. 
\end{equation}
If this is the case, the norm $\|\tau\|_{C^{k,1}_s(M)}$ is
uniformly equivalent to the supremum  of all the expressions in \eqref{eq:rho-s+jpartial}.
\end{lemma}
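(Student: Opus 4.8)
The plan is to transplant the assertion onto a single M\"obius ball and read off the equivalence of norms from the affine structure of the parametrizations, exactly as in the proof of Lemma~\ref{lemma:fancy-inclusion-criterion}. Fix $\Phi_i=\Phi_{p_i}$ with $p_i$ having background coordinates $(\theta_i,\rho_i)$; it is the affine dilation $(x,y)\mapsto(\theta_i+\rho_i x,\rho_i y)$, so its differential is $\rho_i$ times the identity, and on $B_2$ the last coordinate $y$ ranges over a fixed compact subinterval of $(0,\infty)$. Consequently $\rho=\rho_i y$ satisfies $\rho\asymp\rho_i$ on $\Phi_i(B_2)$ with constants independent of $i$. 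Pulling back $\tau=\tau^I_J\,d\theta^J\otimes\partial_{\theta^I}$, each covariant slot contributes a factor $\rho_i$ and each contravariant slot a factor $\rho_i^{-1}$, so that, using the weight $r=r_2-r_1$ of $E$,
\[
\Phi_i^*(\rho^{-s}\tau)=\rho_i^{-s+r}\,y^{-s}\,(\tau^I_J\circ\Phi_i)\,dx^J\otimes\partial_{x^I}.
\]
Since the coordinate basis tensors on $B_2$ are parallel for the Euclidean metric and both $y^{-s}$ and $y^{s}$ are smooth with all derivatives bounded on $B_2$, the norm $\|\Phi_i^*(\rho^{-s}\tau)\|_{C^{k,1}(B_2)}$ is uniformly equivalent to $\rho_i^{-s+r}\sum_{I,J}\|\tau^I_J\circ\Phi_i\|_{C^{k,1}(B_2)}$.

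Next I would unwind $\|\tau^I_J\circ\Phi_i\|_{C^{k,1}(B_2)}$. On a ball, $C^{k,1}$ coincides with the Sobolev space of functions whose weak partial derivatives through order $k+1$ lie in $L^\infty$, with uniformly equivalent norms; this is what makes the hypothesis that $\tau^I_J$ have $L^\infty$ partials up through order $k+1$ the correct one, and it reconciles that hypothesis with the Lipschitz condition on the order-$k$ derivatives. Applying the chain rule to the affine map $\Phi_i$, every $(x,y)$-derivative produces exactly one factor of $\rho_i$, so
\[
\rho_i^{-s+r}\,\|\tau^I_J\circ\Phi_i\|_{C^{k,1}(B_2)}\ \asymp\ \sum_{j=0}^{k+1}\ \sup_{\Phi_i(B_2)}\big|\rho_i^{-s+r+j}\,\partial_{\theta^{i_1}}\!\cdots\partial_{\theta^{i_j}}\tau^I_J\big|,
\]
with constants depending only on $k$ and $n$. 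Because $-s+r+j$ lies in a fixed bounded range and $\rho\asymp\rho_i$ on $\Phi_i(B_2)$, the right-hand side is in turn uniformly equivalent to the same expression with $\rho_i$ replaced by $\rho$, i.e.\ to the sum over $j$ of the suprema over $\Phi_i(B_2)$ of the quantities in \eqref{eq:rho-s+jpartial}.

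Finally I would assemble the estimate: taking the supremum over $i$ and using that $\{\Phi_i(B_1)\}$ covers $M$ while $\{\Phi_i(B_2)\}$ is uniformly locally finite, $\sup_i$ of the local quantities is comparable to the supremum over $M$ of the expressions in \eqref{eq:rho-s+jpartial}. The left-hand side of the resulting chain of equivalences is precisely $\|\rho^{-s}\tau\|_{C^{k,1}(M)}=\|\tau\|_{C^{k,1}_s(M)}$; hence $\tau\in C^{k,1}_s(M)$ if and only if those expressions are bounded (the regularity claim on the $\tau^I_J$ being supplied by the $C^{k,1}$--$W^{k+1,\infty}$ identification of the second step), and the two norms are uniformly equivalent. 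I do not expect a serious obstacle here; the only points that require care are keeping every constant uniform over the infinite family $\{\Phi_i\}$ --- which works precisely because each $\Phi_i$ is a dilation by $\rho_i$ and $y$ stays in a fixed compact range on $B_2$ --- and the elementary identification of the Lipschitz space $C^{k,1}$ on a ball with $W^{k+1,\infty}$, which must be invoked at the point where one passes between ``Lipschitz $k$-th derivatives'' and ``$L^\infty$ derivatives of order $k+1$.''
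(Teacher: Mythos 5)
Your proposal is correct and follows essentially the same route as the paper: pull back by the affine M\"obius parametrizations to get $\Phi_i^*(\rho^{-s}\tau)=\rho_i^{-s+r}y^{-s}(\tau^I_J\circ\Phi_i)\,dx^J\otimes\partial_{x^I}$, strip off the harmless factor $y^{-s}$, and convert derivatives on $B_2$ into weighted coordinate derivatives via the chain rule. You simply spell out the chain-rule bookkeeping and the $C^{k,1}$--$W^{k+1,\infty}$ identification that the paper leaves implicit.
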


\begin{proof}
As in the previous lemma, we may assume
that 
$\overline h$ restricts to the Euclidean metric
in background coordinates 
on the support of $\tau$, so  $\overline h$-covariant derivatives of $\tau$ are coordinate 
derivatives of its component functions. 
By definition, 
$\tau \in C^{k,1}_s(M)$ if and only if $\Phi_i^*(\rho^{-s}\tau)\in C^{k,1}(B_2)$\
for 
any M\"obius parametrization $\Phi_i$.
Given a 
M\"obius parametrization 
$\Phi_i(x,y) = (\theta_i + \rho_i x,\rho_i y)$,
note that 
\begin{equation*}
\Phi_i^*(\rho^{-s}\tau ) = y^{-s} \rho_i^{-s+r} \tau^I_J(\theta_i + \rho_i x,\rho_i y)dx^J \otimes \partial_{x^I}.
\end{equation*}
Since $y^{-s}$ is a smooth function that is bounded above and below and has all derivatives
bounded on $B_2$, 
$\Phi_i^*(\rho^{-s}\tau)\in C^{k,1}(B_2)$ if and only if
\begin{equation*}
\rho_i^{-s+r} \tau^I_J(\theta_i + \rho_i x,\rho_i y)\in C^{k,1}(B_2).
\end{equation*}
The result now follows easily from the chain rule.
\end{proof}

We need the following generalization of Lemma 2.3(b) of \cite{WAHM}.

\begin{lemma}\label{lemma:Ivas-magic-Lipschitz}
Suppose $\tau$ is a tensor of weight $r$ in $C^{1,1}(\overline M)$
\(so that $\tau$ and $\overline\nabla\tau$ are Lipschitz continuous on $\overline M$\).
If $\tau=0$ on $\partial M$, then $\tau$ restricts to an element of $C^{1,1}_{r+1}(M)$,
with $\|\tau\|_{C^{1,1}_{r+1}(M)}\le C \|\tau\|_{C^{1,1}(\overline M)}$.
If in addition $\overline\nabla\tau=0$ on $\partial M$, 
then the restriction is in $C^{1,1}_{r+2}(M)$,
with $\|\tau\|_{C^{1,1}_{r+2}(M)}\le C' \|\tau\|_{C^{1,1}(\overline M)}$.
\end{lemma}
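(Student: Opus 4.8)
The plan is to localize to a single background coordinate chart, translate the boundary‑vanishing hypotheses into statements about component functions, and then apply the component criterion of Lemma~\ref{lemma:plain-lipschitz}. First I would fix a partition of unity $\{\chi_a\}$ subordinate to the finite cover of $\overline M$ by background charts and write $\tau=\sum_a\chi_a\tau$. Multiplication by a smooth cutoff preserves membership in $C^{1,1}(\overline M)$ and the vanishing of $\tau$ on $\partial M$, and since $\overline\nabla(\chi_a\tau)=d\chi_a\otimes\tau+\chi_a\overline\nabla\tau$, it also preserves the vanishing of $\overline\nabla\tau$ on $\partial M$ whenever $\tau$ vanishes there. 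Because the cover is finite, it suffices to prove both estimates for a single tensor $\tau=\tau^I_J\,d\theta^J\otimes\partial_{\theta^I}$ supported in one chart $(\theta,\rho)$, with the constants absorbing the $C^{1,1}$ norms of the cutoffs. In that chart the coordinate (co)frame is nondegenerate up to the boundary, so $\tau=0$ on $\partial M$ is equivalent to $\tau^I_J|_{\rho=0}=0$ for all $I,J$; moreover the components of $\overline\nabla\tau$ are $\partial_k\tau^I_J$ plus terms linear in the $\tau^I_J$ with smooth (Christoffel) coefficients, so on $\{\rho=0\}$, where all $\tau^I_J$ vanish, $\overline\nabla\tau=0$ is equivalent to $\partial_k\tau^I_J|_{\rho=0}=0$ for all $k,I,J$.

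For the first assertion I would check the hypotheses of Lemma~\ref{lemma:plain-lipschitz} with $k=1$ and $s=r+1$. Since $\tau\in C^{1,1}(\overline M)$, each $\tau^I_J$ has $L^\infty$ partial derivatives of orders $0,1,2$; the first derivatives $\partial_i\tau^I_J$ are bounded (the $j=1$ bound), and $\rho\,\partial_i\partial_j\tau^I_J$ is bounded, being the product of a bounded function and an $L^\infty$ function (the $j=2$ bound). The remaining expression in \eqref{eq:rho-s+jpartial} is $\rho^{-1}\tau^I_J$; writing $\tau^I_J(\theta,\rho)=\int_0^\rho\partial_\rho\tau^I_J(\theta,t)\,dt$ (legitimate because $\tau^I_J$ is $C^1$ and vanishes at $\rho=0$) gives $|\tau^I_J(\theta,\rho)|\le C\|\tau\|_{C^{1,1}(\overline M)}\,\rho$. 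Thus every expression in \eqref{eq:rho-s+jpartial} is bounded by $C\|\tau\|_{C^{1,1}(\overline M)}$, and Lemma~\ref{lemma:plain-lipschitz} delivers $\tau\in C^{1,1}_{r+1}(M)$ with the asserted norm bound.

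For the second assertion I would repeat the argument with $s=r+2$, so the relevant expressions are $\rho^{-2}\tau^I_J$, $\rho^{-1}\partial_i\tau^I_J$, and $\partial_i\partial_j\tau^I_J$. The last is bounded as before. Since $\partial_i\tau^I_J$ is Lipschitz on $\overline M$ and now vanishes on $\{\rho=0\}$, we get $|\partial_i\tau^I_J(\theta,\rho)|\le C\|\tau\|_{C^{1,1}(\overline M)}\,\rho$. Finally, because $\tau^I_J(\theta,0)=0$, $\partial_\rho\tau^I_J(\theta,0)=0$, and $\partial_\rho^2\tau^I_J\in L^\infty$, the integral form of Taylor's theorem in $\rho$ gives
\[
\tau^I_J(\theta,\rho)=\int_0^\rho(\rho-t)\,\partial_\rho^2\tau^I_J(\theta,t)\,dt,
\qquad
|\tau^I_J(\theta,\rho)|\le \tfrac12\,C\|\tau\|_{C^{1,1}(\overline M)}\,\rho^2 .
\]
Lemma~\ref{lemma:plain-lipschitz} then yields $\tau\in C^{1,1}_{r+2}(M)$ with the stated bound.

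The argument is essentially bookkeeping with weights and the component criterion; the one point that must be handled with a little care is that a $C^{1,1}$ function has only $L^\infty$ (almost‑everywhere defined) second derivatives, so the Taylor identities above must be read with the integral form of the remainder, which is valid precisely because the first derivatives are absolutely continuous. I do not anticipate any genuine obstacle beyond this.
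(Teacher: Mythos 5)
Your proposal is correct and follows essentially the same route as the paper: reduce by a finite partition of unity to a single background chart, translate the boundary-vanishing hypotheses into vanishing of the components (respectively their first derivatives) at $\rho=0$, integrate in $\rho$ to extract the factors of $\rho$ and $\rho^2$, and invoke Lemma~\ref{lemma:plain-lipschitz}. The only cosmetic differences are that the paper assumes $\overline h$ is Euclidean in the chart so that covariant derivatives are literally coordinate derivatives, and it obtains the quadratic bound by two applications of the fundamental theorem of calculus rather than the integral form of Taylor's theorem; these are equivalent.
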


\begin{proof}
By means of a finite partition of unity, we reduce to the case where $\tau = \tau_J^I d\theta^J \otimes \partial_{\theta^I}$  is
supported in the domain of a single background coordinate chart with
coordinates $(\theta,\rho)$. 
The hypothesis $\tau\in C^{1,1}(\overline M)$ means 
each $\tau^I_J$ and its first and second partial derivatives 
in $(\theta,\rho)$ coordinates are uniformly bounded
by a multiple of $\|\tau\|_{C^{1,1}(\overline M)}$.

First suppose that $\tau=0$ on $\partial M$.
Then by the fundamental theorem of calculus, 
\begin{equation*}
|\tau^I_J(\theta,\rho)| =\left| \int_0^\rho \partial_\rho \tau^I_J(t,\theta)\,dt\right| 
\le \int_0^\rho C\|\tau\|_{C^{1,1}(\overline M)} = \rho C \|\tau\|_{C^{1,1}(\overline M)},
\end{equation*}
which shows that $\rho^{-1}\tau^I_J$ is bounded by $C \|\tau\|_{C^{1,1}(\overline M)}$. 
It then follows from Lemma \ref{lemma:plain-lipschitz} that
$\tau\in C^{1,1}_1(M)$ as claimed.

Now suppose in addition that $\overline\nabla \tau= 0$ on $\partial M$, which means
that all of the first $(\theta,\rho)$ derivatives of $\tau^I_J$ vanish on the boundary.
The Lipschitz condition on first derivatives then implies that all such
first derivatives are bounded by $C\rho\|u\|_{C^{1,1}(\overline M)}$.
Using the fundamental theorem of calculus as before, we conclude that $\rho ^{-2}\tau^I_J$ 
is bounded by a multiple of $\|\tau\|_{C^{1,1}(\overline M)}$, and then
Lemma \ref{lemma:plain-lipschitz} once again completes the proof.
\end{proof}

We will also need the regularization technique given by group-theoretic convolution introduced in \cite{WAHM}.  The half-space model of hyperbolic space $\bH = \bH^{n+1}$ is a group under $(\theta,\rho)\cdot (\theta',\rho') = (\theta+\rho \theta', \rho \rho')$.  For bounded integrable functions $\tau$ and $\psi$, at least one of which is compactly supported, define $\tau * \psi$ by
\[ (\tau*\psi)(q) := \int_{\bH} \tau(p) \psi( p^{-1} q) dV_{\bH}(p). \]

Here is a slight adaptation of a lemma from \cite{WAHM}.
\begin{lemma} \label{lemma:pre-reg}
Let $U$ and $V$ be open subsets of $\bH$.  Suppose that $\psi \in C^{\infty}_c(V)$ and that 
$\tau$ is a bounded integrable function supported in $U$.  Then 
\begin{enumerate}
\item $\mathrm{supp}( \tau * \psi) \subset UV = \{ pq: p \in U, q \in V \}$.
\item If $\tau$ is a real-valued function in $C^{m-1,1}(\overline{\bH})$ for some $m\ge 1$, then
\[ \tau * \psi \in \bigcap_{k \in \bN_0, \; \alpha \in (0,1)} \fancy^{k,\alpha;m}(M), \]
and
\[ \gnorm{\tau * \psi}_{k,\alpha;m} \leq C(k,\alpha,\mathrm{supp}\; \psi) \; \|{ \tau }\|_{C^{m-1,1}} \| \psi\|_{C^{k+1}}. \]
\item Suppose $\int_{\bH} \psi(q^{-1}) dV_{\bH} = 1$, and $\tau$ is a real-valued function in 
$C^{0,1}(\overline{\bH})$.  Then $\tau - \tau*\psi = O(\rho)$.
\end{enumerate}
\end{lemma}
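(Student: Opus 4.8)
The plan is to base everything on one structural fact: in background coordinates the M\"obius parametrization $\Phi_{p_i}(x,y)=(\theta_i+\rho_i x,\rho_i y)$ is left translation $L_{p_i}$ on the group $\bH$, while $dV_{\bH}=\rho^{-(n+1)}\,d\theta\,d\rho$ is a \emph{left}-invariant Haar measure (left translation by $(\theta_0,\rho_0)$ scales each coordinate by $\rho_0$, so it scales $\rho^{-(n+1)}\,d\theta\,d\rho$ by $\rho_0^{-(n+1)}\cdot\rho_0^{n+1}=1$). Part (a) uses nothing: if $p\notin U$ then $\tau(p)=0$ and if $p^{-1}q\notin V$ then $\psi(p^{-1}q)=0$, so the integrand defining $(\tau*\psi)(q)$ vanishes unless $p\in U$ and $p^{-1}q\in V$, in which case $q=p\,(p^{-1}q)\in UV$; hence $\mathrm{supp}(\tau*\psi)\subset UV$. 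For parts (b) and (c) I would first record three consequences of left-invariance: $\Phi_{p_i}^*(\tau*\psi)=(\Phi_{p_i}^*\tau)*\psi$ (substitute $p=p_i p'$); $(\tau*\psi)(q)=\int_{\bH}\tau(qs)\,\psi(s^{-1})\,dV_{\bH}(s)$ (substitute $p=qs$); and the convolution $\phi*\psi$ of a bounded $\phi$ with $\psi$ may be written either as $\int_{\bH}\phi(p)\psi(p^{-1}q)\,dV_{\bH}(p)$ or, after the substitution $r=p^{-1}q$, as $\int_{\bH}\phi(qr^{-1})\psi(r)\,d\mu(r)$ with $d\mu$ a fixed ($q$-independent) measure --- and in both forms the group element carrying the $q$-dependence ($p^{-1}q$, resp.\ $q r^{-1}$) is \emph{affine} in the coordinates of $q$.

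For part (b) I would work in a single background chart, where the fancy norm is the supremum over M\"obius charts $\Phi_{p_i}$ of the corresponding $B_2$-norms, and choose $\hbar$ Euclidean there, so $\overline{\nabla}$ is coordinate differentiation and $\Phi_{p_i}^*(\overline{\nabla}^l u)$ is the $l$-fold coordinate derivative of $\Phi_{p_i}^*u$ as a tensor on $B_2$. Put $u=\tau*\psi$ and $\sigma_i:=\Phi_{p_i}^*\tau=\tau\circ L_{p_i}$; then $\Phi_{p_i}^*u=\sigma_i*\psi$, and since $\Phi_{p_i}^*\rho=\rho_i y$, every component of $\partial_x^{\beta}\Phi_{p_i}^*\big(\rho^{-l}\overline{\nabla}^l u\big)$ is a finite sum of terms $\rho_i^{-l}\,c(y)\,\partial_x^{\,l+|\beta'|}(\sigma_i*\psi)$ with $c$ smooth and bounded on $B_2$ and $|\beta'|\le|\beta|$. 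Because $L_{p_i}$ scales each coordinate by $\rho_i$, $\|\partial_x^{\gamma}\sigma_i\|_{L^\infty}\le\rho_i^{|\gamma|}\|\tau\|_{C^{m-1,1}(\overline{\bH})}$ for $|\gamma|\le m$, and the crux is
\[
\big\|\partial_x^{N}(\sigma_i*\psi)\big\|_{L^\infty(B_2)}\ \le\ C(N,\mathrm{supp}\,\psi)\,\rho_i^{\,\min(N,m)}\,\|\tau\|_{C^{m-1,1}}\,\|\psi\|_{C^{(N-m)_{+}+1}}.
\]
This follows from the affineness above: a coordinate derivative in $q$ passes through the convolution onto one of the two factors as an ordinary coordinate derivative, up to $\mathrm{supp}\,\psi$-bounded constant-coefficient linear combinations \emph{and with no lower-order terms}. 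For $N\le m$ all $N$ derivatives go onto $\sigma_i$, gaining $\rho_i^{N}$; for $N>m$, exactly $m$ go onto $\sigma_i$, the result is rewritten as $\rho_i^{\,m}$ times a finite sum of convolutions of the uniformly bounded function $\rho_i^{-m}\partial^{m}\sigma_i$ against smooth compactly supported functions, and the remaining $N-m$ go onto those. Feeding this back with $N=l+|\beta'|\le k$ and $l\le m$, the weight $\rho_i^{-l}$ is absorbed by $\rho_i^{\,\min(l+|\beta'|,m)}$, leaving the nonnegative power $\rho_i^{\,\min(l+|\beta'|,m)-l}$, bounded on $M$; one more coordinate derivative shows the top-order coefficient is uniformly Lipschitz on $B_2$, hence in $C^{0,\alpha}(B_2)$ for all $\alpha\in(0,1)$. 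Summing over $l\le m$ and taking the supremum over $i$ gives $\tau*\psi\in\fancy^{k,\alpha;m}(M)$ for all $k\ge m$, $\alpha\in(0,1)$, with $\gnorm{\tau*\psi}_{k,\alpha;m}\le C(k,\alpha,\mathrm{supp}\,\psi)\|\tau\|_{C^{m-1,1}}\|\psi\|_{C^{k+1}}$.

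For part (c), the hypothesis $\int_{\bH}\psi(q^{-1})\,dV_{\bH}(q)=1$ with the identity $(\tau*\psi)(q)=\int_{\bH}\tau(qs)\psi(s^{-1})\,dV_{\bH}(s)$ gives $\tau(q)=\tau(q)\int_{\bH}\psi(s^{-1})\,dV_{\bH}(s)$, hence $\tau(q)-(\tau*\psi)(q)=\int_{\bH}\big(\tau(q)-\tau(qs)\big)\psi(s^{-1})\,dV_{\bH}(s)$. The integrand is supported where $s=(\sigma,\lambda)\in(\mathrm{supp}\,\psi)^{-1}$, a compact subset of $\bH$ on which $\lambda$ is bounded away from $0$ and $\infty$; since $qs=(\theta+\rho\sigma,\rho\lambda)$ for $q=(\theta,\rho)$, the points $q$ and $qs$ lie at Euclidean distance $\rho\,|(\sigma,\lambda-1)|\le C\rho$ and are joined by the segment $t\mapsto\big(\theta+t\rho\sigma,\;\rho(1+t(\lambda-1))\big)$, which remains in $\overline{\bH}$. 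Since $\tau$ is uniformly Lipschitz on $\overline{\bH}$, $|\tau(q)-\tau(qs)|\le C\rho(q)\|\tau\|_{C^{0,1}}$, and integrating against $|\psi(s^{-1})|\in L^1(dV_{\bH})$ yields $|\tau(q)-(\tau*\psi)(q)|\le C\rho(q)$, that is, $\tau-\tau*\psi=O(\rho)$.

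The only point needing real care is the ``no lower-order terms'' assertion in part (b). Differentiating $\tau*\psi$ instead by the left-invariant vector fields $\rho\partial_{\theta^a}$ gives iterated derivatives that decay only like $O(\rho)$ rather than $O(\rho^{\min(j,m)})$, because differentiating $\rho$ produces lower-order corrections, and the missing decay reappears only after the cancellations internal to $\overline{\nabla}^l$. Working throughout with $\sigma_i*\psi$ and with the affineness of the group translations in coordinates avoids this, since pre-composing with an affine map creates no lower-order terms; this is precisely what closes the bookkeeping for $N>m$. Everything else is the routine chain-rule and Leibniz estimation already used in the proofs of the preceding lemmas.
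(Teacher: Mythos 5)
Your argument is correct and is in substance the same as the paper's: the paper proves this lemma simply by citing Lemma 2.7 of \cite{WAHM} and observing that its proof of part (b) uses only the uniform boundedness of the background coordinate derivatives of $\tau$ up to order $m$ (hence applies when $\tau\in C^{m-1,1}$) and that part (c) uses exactly Lipschitz regularity, and the group-convolution mechanics you reconstruct --- left-invariance of $dV_{\bH}$, the identity $\Phi_{p_i}^*(\tau*\psi)=(\Phi_{p_i}^*\tau)*\psi$, and pushing the first $m$ derivatives onto $\tau$ through the affine translations and the remaining ones onto $\psi$ --- is precisely that cited argument. The only point deserving one extra word is the differentiation under the integral sign at order exactly $m$, where $\partial^m\tau$ exists only almost everywhere; this is handled routinely by difference quotients and the Lipschitz bound on the $(m-1)$-st derivatives.
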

\begin{proof}
The first claim is exactly as in Lemma 2.7(a) of \cite{WAHM}.  
The second claim is a minor modification of the proof 
of Lemma 2.7(b)
given in \cite{WAHM}: The proof there assumed that $\tau\in \fancy^{m,0;m}(\mathbb H)$, but
actually used only the fact that the background coordinate derivatives of $\tau$ up to order $m$ 
are \emph{uniformly bounded}, 
which is still true if $\tau$ is merely in $C^{m-1,1}(\overline M)$.  

Finally, the proof of the third claim in \cite{WAHM} uses precisely the Lipschitz regularity indicated in our hypothesis.
\end{proof}

We now establish a regularization result analogous of Theorem 2.6 of \cite{WAHM}.

\begin{prop} [Regularization] \label{prop:regularization-map}
Suppose $\tau$ is a tensor field in $C^{1,1}(\Mbar;\Sigma^2)$.  There exists a tensor $R(\tau)$ that lies in $\fancy^{k,\alpha;2}(M; \Sigma^2)$ for all $k \geq 0$ and $\alpha \in (0,1)$, depending linearly on $\tau$, such that
\[ R(\tau) - \tau \in C^{1,1}_{2+2}(M; \Sigma^2). \] 
Further, for each $k$ and $\alpha$ there exists a constant $C$ such that
\[ \gnorm {R(\tau) }_{k,\alpha;2} \leq C \|\tau\|_{C^{1,1}(\overline M)}. \]
\end{prop}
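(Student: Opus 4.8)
The plan is to follow the proof of Theorem 2.6 of \cite{WAHM}, inserting one extra step that pushes the error term into a weighted Lipschitz space rather than merely into $O(\rho)$. First I would take a finite partition of unity subordinate to the background coordinate charts and write $\tau=\sum_i\chi_i\tau$, reducing the construction to the case where $\tau$ is supported in a single chart. The pieces supported in the interior of $M$ are handled by ordinary Euclidean mollification: the results are smooth compactly supported tensors, which lie in every $\fancy^{k,\alpha;2}(M)$ and differ from $\chi_i\tau$ by a compactly supported (hence, trivially, weighted-Lipschitz) tensor, all with norms bounded by a multiple of $\|\tau\|_{C^{1,1}(\overline M)}$. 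So I may assume $\tau$ is supported in a chart near $\partial M$. Identifying a neighborhood of $\mathrm{supp}\,\tau$ with an open subset of the half-space model $\bH$ via background coordinates $(\theta,\rho)$, and extending by a cutoff, the problem reduces to regularizing a single compactly supported component function $f\in C^{1,1}(\overline{\bH})$ and then reassembling. Here Lemma~\ref{lemma:fancy-inclusion-criterion} lets me check membership in the fancy spaces componentwise, and (taking $\overline h$ Euclidean near $\mathrm{supp}\,\tau$, which the spaces permit) the tensor norm is controlled by the component norms via Lemma~\ref{lemma:plain-lipschitz}, with the correct tensor weight $r=2$ built in.

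For the core construction I would fix a function $\psi\in C^\infty_c(\bH)$ with $\int_{\bH}\psi(q^{-1})\,dV_{\bH}=1$ and set
\[
R(f):=f*\psi\;+\;\rho\,\bigl(\,(\rho^{-1}(f-f*\psi))*\psi\,\bigr).
\]
By Lemma~\ref{lemma:pre-reg}(b) with $m=2$, the term $f*\psi$ lies in $\bigcap_{k,\alpha}\fancy^{k,\alpha;2}(M)$ with $\gnorm{f*\psi}_{k,\alpha;2}\le C(k,\alpha)\|f\|_{C^{1,1}}$, and by Lemma~\ref{lemma:properties}(\ref{lemma:embedding-Lipschitz}) it lies in $C^{1,1}(\overline M)$; hence $f_1:=f-f*\psi$ is a $C^{1,1}(\overline M)$ function of comparable norm, vanishing on $\partial M$ by Lemma~\ref{lemma:pre-reg}(c). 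Using the identity $\rho^{-1}f_1(\theta,\rho)=\int_0^1(\partial_\rho f_1)(\theta,\rho s)\,ds$ and the Lipschitz continuity of $\partial_\rho f_1$ (a refinement of the fundamental-theorem-of-calculus argument in the proof of Lemma~\ref{lemma:Ivas-magic-Lipschitz}), I obtain $g_1:=\rho^{-1}f_1\in C^{0,1}(\overline M)$ with $\|g_1\|_{C^{0,1}}\le C\|f_1\|_{C^{1,1}}$; note that dividing by $\rho$ costs a derivative, so $g_1$ is only Lipschitz. Then Lemma~\ref{lemma:pre-reg}(b) with $m=1$ puts $g_1*\psi$ in $\bigcap_{k,\alpha}\fancy^{k,\alpha;1}(M)$, and the mapping property of $M_\rho$ in Lemma~\ref{lemma:properties}(\ref{lemma:properties-ctyofder}) puts $\rho(g_1*\psi)$ back into $\bigcap_{k,\alpha}\fancy^{k,\alpha;2}(M)$. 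Chaining these bounds, $R(f)$ depends linearly on $f$, lies in every $\fancy^{k,\alpha;2}(M)$, and satisfies $\gnorm{R(f)}_{k,\alpha;2}\le C(k,\alpha)\|f\|_{C^{1,1}}$. For the error, Lemma~\ref{lemma:pre-reg}(c) applied to $g_1$ gives $g_1-g_1*\psi=O(\rho)$, so $f-R(f)=f_1-\rho(g_1*\psi)=\rho(g_1-g_1*\psi)=O(\rho^2)$.

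After reassembling, the resulting $R(\tau)$ lies in $\fancy^{k,\alpha;2}(M;\Sigma^2)$ for all $k$ and $\alpha$, depends linearly on $\tau$, and satisfies $\gnorm{R(\tau)}_{k,\alpha;2}\le C\|\tau\|_{C^{1,1}(\overline M)}$, while $R(\tau)-\tau$ is a $C^{1,1}(\overline M;\Sigma^2)$ tensor all of whose components are $O(\rho^2)$; in particular $R(\tau)-\tau$ and $\overline{\nabla}(R(\tau)-\tau)$ vanish on $\partial M$. Lemma~\ref{lemma:Ivas-magic-Lipschitz}, applied with weight $r=2$, then gives $R(\tau)-\tau\in C^{1,1}_{2+2}(M;\Sigma^2)$ with $\|R(\tau)-\tau\|_{C^{1,1}_{2+2}}\le C\|\tau\|_{C^{1,1}(\overline M)}$, which completes the argument. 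The step I expect to be the main obstacle is precisely the one not already present in \cite{WAHM}: a single group convolution yields only $R(f)-f=O(\rho)$, which is too weak. The correction term $\rho\,(g_1*\psi)$ cancels the leading error, but because $g_1=\rho^{-1}f_1$ is merely Lipschitz it can only be regularized in $\fancy^{k,\alpha;1}$; it is the subsequent multiplication by $\rho$, via Lemma~\ref{lemma:properties}(\ref{lemma:properties-ctyofder}), that restores the degree-two fancy regularity, after which the vanishing of the first $\overline{\nabla}$-jet of the error on $\partial M$ lets Lemma~\ref{lemma:Ivas-magic-Lipschitz} finish. Everything else---tracking the constants through the two convolutions, the $\rho^{-1}$ division, and the $M_\rho$ map---is routine, using only the bounds already recorded in Lemmas~\ref{lemma:pre-reg} and~\ref{lemma:properties}.
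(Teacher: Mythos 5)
Your proposal is correct and follows essentially the same two-step group-convolution argument as the paper; the only real difference is that in the second step the paper convolves the normal derivative $w=\partial_\rho\bigl(\tau-\tau*\psi\bigr)$ and adds $\rho\,(w*\psi)$, whereas you convolve the difference quotient $\rho^{-1}(\tau-\tau*\psi)$, which is likewise Lipschitz with controlled norm by your fundamental-theorem-of-calculus identity. Both corrections yield an error that is $O(\rho^2)$ with vanishing first jet on $\partial M$, so Lemma \ref{lemma:Ivas-magic-Lipschitz} finishes the proof in either version.
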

\begin{proof}
This requires only minor changes to the inductive proof of Theorem 2.6 in \cite{WAHM}.  In our case the induction requires two steps which we detail explicitly.

By finishing the argument with a partition of unity, it will be sufficient  to assume that $\tau$ is supported within a single background coordinate chart, which we write $(\theta, \rho)$ and use to identify with an open subset of the upper half-space $\bH$.  
By Lemmas \ref{lemma:fancy-inclusion-criterion} and \ref{lemma:plain-lipschitz} it suffices to work with component functions of $\tau$. 
To simplify notation, therefore, for the rest of the proof we assume $\tau$ is a real-valued function.
Let $\psi$ be a smooth function on $\bH$ that satisfies $\int_{\bH} \psi(p^{-1}) dV_{h}(p) = 1$ and that is compactly supported in a sufficiently small neighbourhood $V$ of $(0,1)$.

Initially $\tau \in C^{1,1}(\Mbar)$.  We begin by subtracting off a regularized version of its boundary value.  To this end, set $\tiltau = \tau * \psi$.  Now Lemma \ref{lemma:pre-reg} yields that $\tiltau \in \cap_{k,\alpha} \fancy^{k,\alpha;2}(M)$ and 
\[ \gnorm {\tiltau} _{k,\alpha;2} \leq C \|\tau\|_{C^{1,1}},  \]
and moreover, $\tau - \tiltau = O(\rho)$.  Applying Lemma \ref{lemma:properties}\eqref{lemma:embedding-Lipschitz} shows that $\tau - \tiltau \in C^{1,1}(\Mbar)$, and then Lemma \ref{lemma:Ivas-magic-Lipschitz} implies $\tau - \tiltau \in  C^{1,1}_1(M)$.

To proceed, set $u := \tau - \tiltau$; by the discussion above this lies in $C^{1,1}(\Mbar)\cap C^{1,1}_1(M)$.  Taking a $\rho$-derivative, we obtain
\[ w:= \frac{\partial u}{\partial \rho} \in C^{0,1}(\Mbar). \]
Setting $\wtil = w*\psi$, it follows that $\wtil \in \fancy^{k,\alpha;1}(M)$ for all $k$ by Lemma \ref{lemma:pre-reg}, and
thus $\wtil\in C^{2,\alpha}(M)\subset C^{1,1}(M)$ and
$\gnorm{\wtil}_{k,\alpha;1} \leq C \|w\|_{C^{0,1}}$.  Once more we obtain $w - \wtil = O(\rho)$.

Now $u - \rho \wtil \in C^{1,1}_1(M)$; we claim that $u - \rho \wtil \in C^{1,1}_2(M)$.  First, by Lemma \ref{lemma:properties}\eqref{lemma:properties-ctyofder}, $\rho \wtil \in \fancy^{k,\alpha;2}(M)$ and so $u - \rho \wtil \in C^{1,1}(\Mbar)$, and vanishes at $\rho=0$ because it is $O(\rho)$.  Thus any tangential derivative of the form 
\[ \frac{\partial}{\partial \theta^{\alpha}} (u - \rho \wtil  ),\]
will vanish at $\rho =0$ as well.  Any $\rho$-derivative may be written
\begin{align*}
\frac{\partial}{\partial \rho} (u - \rho \wtil  ) &= \partial_{\rho} u - \wtil - \rho \partial_{\rho} \wtil \\
&= w-\wtil + O(\rho) \\
&= O(\rho),
\end{align*}
which also vanishes at the boundary.  Thus by Lemma \ref{lemma:Ivas-magic-Lipschitz}, $u - \rho \wtil \in C^{1,1}_2(M)$ as claimed.  Set $\tiltau_2 = \tiltau + \rho \wtil$.
Also 
\[ \gnorm{\rho \wtil}_{k,\alpha;2} \leq C \gnorm{\wtil}_{k,\alpha;1} \leq C\|{ w}\|_{C^{0,1}} \leq C \| {u} \|_{C^{1,1}}. \]
So putting all of the estimates together,
\[ \gnorm {\tiltau_2}_{k,\alpha;2} \leq C \| {\tau}\|_{C^{1,1}}. \]
Now set $R(\tau) = \tiltau_2$.  This completes the proof.
\end{proof}

\section{An extension result}
\label{sec:extend}

Recall that $M$ is the open unit ball, $\hbar $ is the Euclidean metric on the closed ball $\overline M$,
and $h = \rho^{-2} \hbar$ is the hyperbolic metric on $M$.
The standard round metric is then $\hhat = \hbar|_{\bS^n}$.   
We describe a two-step extension procedure that takes metrics on $\partial M$ to asymptotically hyperbolic metrics on $M$.  
Let $\phi$ be a $C^{\infty}(\Mbar)$ bump function that is equal to $1$ on a neighbourhood
of $\partial M$ and supported in $A=M\smallsetminus \{0\}$. 
Let $P\colon A\to \partial M$ be the radial projection, and 
define
\[ E( \ghat ) := \phi P^* \ghat. \]
It is immediate that $E$ is a bounded linear map
\begin{equation}\label{eq:E-mapping}
E: C^{1,1}(\partial M, \Sigma^2 (\partial M)) \longrightarrow C^{1,1}(\Mbar, \Sigma^2(\Mbar)). 
\end{equation}

For the second step, we now extend to asymptotically hyperbolic metrics.  Define $T$ by
\[ T( \ghat) = h + \rho^{-2} R ( E ( \ghat - \hhat) ), \]
where $R$ is the regularization map from Proposition \ref{prop:regularization-map},
so that $R ( E ( \ghat - \hhat) )$ is obtained from $E ( \ghat - \hhat) $ locally by applying
the convolution operator twice.

Using the terminology and notation of \cite{WAHM}, 
we say a metric $g$ on $M$ is 
\defname{weakly $C^{k,\alpha}$ asymptotically hyperbolic} 
if $\gbar = \rho^2 g\in \fancy^{k,\alpha;m}(M,\Sigma^2(M))$ for some $k\ge 2$ and $m\ge 1$,
and $|d\rho|^2_{\gbar}=1$ on $\partial M$.  
The space of all such metrics for a given value of $m$ is denoted by
$\mathscr M^{k,\alpha;m}_{\text{weak}}$.

\begin{lemma}\label{lemma:properties-of-T}
Let $T$ be defined as above.
\begin{enumerate}
\item\label{part:Thhat}
$T(\hhat) = h$.  
\item\label{part:Tmapping}
For any $k \geq 2$ and $\alpha\in[0,1]$, $T$ is a continuous affine map of Banach spaces
\begin{equation} \label{eqn:defn-for-T}
T: C^{1,1}(\partial M, \Sigma^2 (\partial M)) \longrightarrow C^{k,\alpha}(M;\Sigma^2(M)).
\end{equation}
\item\label{part:TweaklyAH}
For any $\ghat\in C^{1,1}(\partial M, \Sigma^2 (\partial M))$
sufficiently close to $\hhat$,  $T(\ghat)$ 
is a metric on $M$ and lies in $\mathscr M^{k,\alpha;2}_{\text{weak}}$ for
all $k\ge 2$ and all $\alpha\in(0,1)$.
\end{enumerate}
\end{lemma}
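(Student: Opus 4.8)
The plan is to verify the three parts of Lemma~\ref{lemma:properties-of-T} in order, each reducing to properties already established for $E$ and $R$.

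\textbf{Part (a).} Since $\hhat = \hbar|_{\bS^n}$ and $P$ is the radial projection, $P^*\hhat$ agrees with $\hbar$ up to terms vanishing in the radial direction; in any case $E(\hhat - \hhat) = E(0) = 0$, so $T(\hhat) = h + \rho^{-2}R(0) = h$ because $R$ is linear. This is immediate.

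\textbf{Part (b).} Here I would track regularity through the two-step construction. Given $\ghat \in C^{1,1}(\partial M,\Sigma^2(\partial M))$, the difference $\ghat - \hhat$ is also $C^{1,1}$ on $\partial M$, so by the mapping property~\eqref{eq:E-mapping} we have $E(\ghat - \hhat) \in C^{1,1}(\Mbar, \Sigma^2(\Mbar))$. Applying Proposition~\ref{prop:regularization-map}, $R(E(\ghat-\hhat)) \in \fancy^{k,\alpha;2}(M;\Sigma^2)$ for all $k\ge 0$, $\alpha\in(0,1)$, with norm bounded by $C\|E(\ghat-\hhat)\|_{C^{1,1}(\overline M)} \le C'\|\ghat-\hhat\|_{C^{1,1}(\partial M)}$. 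By Lemma~\ref{lemma:properties}\eqref{part:regular-in-fancy} read backwards, or more directly by the definition of the fancy norm, membership in $\fancy^{k,\alpha;2}$ gives in particular that $R(E(\ghat-\hhat)) \in C^{k,\alpha}_2(M;\Sigma^2)$, so $\rho^{-2}R(E(\ghat-\hhat)) \in C^{k,\alpha}_0(M;\Sigma^2) = C^{k,\alpha}(M;\Sigma^2)$. Since $h = \rho^{-2}\hbar$ is smooth on $M$ and lies in every $C^{k,\alpha}(M)$ (it is a fixed metric, and on $M$ away from the boundary this is automatic while near the boundary $\rho^{-2}\hbar$ has bounded M\"obius-rescaled derivatives of all orders), $T(\ghat) = h + \rho^{-2}R(E(\ghat-\hhat))$ lies in $C^{k,\alpha}(M;\Sigma^2)$, with the affine dependence on $\ghat$ continuous because $\ghat \mapsto R(E(\ghat-\hhat))$ is affine and bounded. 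For $\alpha \in \{0,1\}$ one argues similarly, using that $\fancy^{k,1;2}$ and $\fancy^{k,0;2}$ embed into the corresponding plain spaces, or simply by interpolation/inclusion from nearby $\alpha$.

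\textbf{Part (c).} This is the substantive part. The issue is to check that $T(\ghat)$ is genuinely a positive-definite metric and that the two structural conditions defining $\mathscr M^{k,\alpha;2}_{\mathrm{weak}}$ hold: namely $\rho^2 T(\ghat) \in \fancy^{k,\alpha;2}(M;\Sigma^2(M))$ and $|d\rho|^2_{\rho^2 T(\ghat)} = 1$ on $\partial M$. For the fancy-space membership, write $\rho^2 T(\ghat) = \hbar + R(E(\ghat - \hhat))$. The term $R(E(\ghat-\hhat))$ is in $\fancy^{k,\alpha;2}$ by Proposition~\ref{prop:regularization-map}, and $\hbar$ lies in $\fancy^{k,\alpha;2}(M;\Sigma^2)$ by Lemma~\ref{lemma:properties}\eqref{part:regular-in-fancy} (being smooth on $\Mbar$ it is in $C^{k,\alpha}_2(M;\Sigma^2) \subset \fancy^{k,\alpha;2}$); hence the sum is too. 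For positive-definiteness: at $\rho = 0$, $\rho^2 T(\ghat)$ has boundary value $\hbar|_{\partial M}$ plus the boundary value of $R(E(\ghat-\hhat))$. By the defining property $R(\tau) - \tau \in C^{1,1}_{2+2}(M)$, the boundary trace of $R(E(\ghat-\hhat))$ equals that of $E(\ghat-\hhat)$, which near $\partial M$ is $\phi P^*(\ghat - \hhat) = P^*(\ghat-\hhat)$ (since $\phi \equiv 1$ there); so the boundary trace of $\rho^2 T(\ghat)$ restricted to $T\partial M$ is $\hhat + (\ghat - \hhat) = \ghat$, and the normal components are controlled by $\hbar$'s. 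Since $\ghat$ is close to $\hhat$ in $C^{1,1}$ hence in $C^0$, and $\hbar$ is a fixed smooth metric, $\rho^2 T(\ghat)$ is uniformly close to $\hbar$ on $\Mbar$ in sup norm, hence positive-definite everywhere on the compact $\Mbar$ for $\ghat$ in a $C^{1,1}$-neighbourhood of $\hhat$; dividing by $\rho^2$ shows $T(\ghat)$ is a metric on $M$. Finally, $|d\rho|^2_{\rho^2 T(\ghat)}$ at $\partial M$ is computed from the $\rho\rho$-component of the inverse of the boundary trace of $\rho^2 T(\ghat)$; since that trace agrees with $\hbar$ in all components involving $\partial_\rho$ (the perturbation $P^*(\ghat - \hhat)$ is a pullback under the radial projection, so it annihilates $\partial_\rho$) and $|d\rho|^2_{\hbar} = 1$ on $\partial M$ for our defining function $\rho = (1-|x|^2)/2$, we get $|d\rho|^2_{\rho^2 T(\ghat)} = 1$ on $\partial M$.

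\textbf{Main obstacle.} The delicate point is part (c): confirming that the perturbation introduced by $R(E(\ghat-\hhat))$ does not disturb the normal-direction data, so that $|d\rho|^2$ stays exactly $1$ on the boundary. This hinges on two facts working together — that $P^*$ of a boundary tensor has no $d\rho$ component, and that the regularization $R$ alters a $C^{1,1}$ tensor only by something in $C^{1,1}_{4}(M)$, which vanishes to second order at $\partial M$ and therefore contributes nothing to the boundary trace or its relevant components. One must also be slightly careful that "sufficiently close to $\hhat$" is measured in the $C^{1,1}(\partial M)$ norm and that the bound $C$ in the final estimate of Proposition~\ref{prop:regularization-map} is used to make the smallness quantitative. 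The positive-definiteness and the bookkeeping of which terms land in which weighted space are otherwise routine.
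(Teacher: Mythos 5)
Your proposal is correct and follows essentially the same route as the paper: part (a) from linearity of $R$ and $E(0)=0$, part (b) from Proposition \ref{prop:regularization-map} combined with \eqref{eq:E-mapping}, and part (c) from the fancy-space membership of $\rho^2 T(\ghat)$, a continuity argument for positive-definiteness, and the decomposition $\gbar = d\rho^2 + $ (pullback terms) $+\ C^{1,1}_{2+2}(M)$ to get $|d\rho|^2_{\gbar}=1$ on $\partial M$. One small correction: your justification that $\hbar\in\fancy^{k,\alpha;2}$ cites the inclusion $C^{k,\alpha}_2(M;\Sigma^2)\subset\fancy^{k,\alpha;2}$, but Lemma \ref{lemma:properties}\eqref{part:regular-in-fancy} only gives $C^{k,\alpha}_{2+m}\subset\fancy^{k,\alpha;m}$, i.e.\ $C^{k,\alpha}_{2+2}\subset\fancy^{k,\alpha;2}$, and the inclusion you state is false in general; the correct (and immediate) reason is that $\overline\nabla\hbar=0$, so all higher terms in the fancy norm of $\hbar$ vanish.
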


\begin{proof}
Part \eqref{part:Thhat} is immediate from the definition.
Proposition \ref{prop:regularization-map} shows that $R$ is
a continuous linear map from $C^{1,1}(\Mbar, \Sigma^2 (\Mbar))$ to 
 $\fancy^{k,\alpha;2}(M; \Sigma^2(M))\subset C^{k,\alpha}_2(M;\Sigma^2(M))$, and then 
\eqref{part:Tmapping} follows easily from this and
\eqref{eq:E-mapping}.
To prove \eqref{part:TweaklyAH}, put $g = T(\ghat)$, which is in $C^{k,\alpha}(M)$ for all $k,\alpha$
by \eqref{part:Tmapping}, and then
Lemma \ref{lemma:properties} shows that $\gbar = \rho^2 g$ lies in $\fancy^{k,\alpha;2}(M)$. 
Because $T(\ghat)\to h$ in $C^{k,\alpha}(M)$ as $\ghat\to \hhat$, $g$ will be positive definite
on $M$ provided $\ghat$ is sufficiently close to $\hhat$.
Finally, the fact that $|d\rho|^2_{\gbar}=1$ on $\partial M$ follows from 
$\gbar = d\rho^2 + E(\ghat) + C^{1,1}_{2+2}(M)$, which is a consequence of 
Proposition \ref{prop:regularization-map}.
\end{proof}

\section{A family of $C^{1,1}$ metrics}
\label{sec:C11-family}

In this section we construct a one-parameter family of $C^{1,1}$ metrics on the unit sphere $\bS^n$ that approaches the round metric in $C^{1,1}$ norm.  To begin, consider $\bR^2 \times \bR^{n-2}$ with standard coordinates $(x^1, \cdots, x^n)$.

\begin{lemma} \label{lemma:R2D2}
Let $n \geq 3$.  There exists a $C^{1,1}$ Riemannian metric $k$ on $\bR^n$ of the form
\[ k = \delta + e, \]
where $\delta$ is the Euclidean metric and $|e|_{\delta}^2 = O(|x|^2)$.
No conformal multiple of this metric can be improved to class $C^2$
by any change of coordinates.
\end{lemma}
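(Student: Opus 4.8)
The obstruction I would use is a conformally and diffeomorphism invariant piece of the curvature that is automatically continuous (for $n\ge 4$), or free of a certain singular part (for $n=3$), whenever the metric is $C^2$. For $n\ge 4$ this is the Weyl tensor $W$, regarded as a $(1,3)$-tensor field: it is pointwise conformally invariant, natural under diffeomorphisms, and continuous for a $C^2$ metric. Suppose, for contradiction, that $\Psi^*(e^{2f}k)$ is of class $C^2$ for some diffeomorphism $\Psi$ (which for the application we may take smooth) and some positive conformal factor $e^{2f}$. Then $e^{2f}k=(\Psi^{-1})^*\big(\Psi^*(e^{2f}k)\big)$ is $C^2$, and since $k$ is $C^{1,1}$ with $k(0)=\delta$, writing $e^{2f}$ as a ratio of components shows $f\in C^{1,1}$; hence the conformal transformation law $W(e^{2f}k)=W(k)$ holds almost everywhere, and naturality gives $W\big(\Psi^*(e^{2f}k)\big)=\Psi^*W(k)$ a.e. The left side is continuous because $\Psi^*(e^{2f}k)\in C^2$, so $W(k)$ would agree a.e.\ near $0$ with a continuous tensor. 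It therefore suffices to build a $C^{1,1}$ metric $k=\delta+e$ of the stated form whose Weyl tensor, as an $L^\infty$ tensor, is essentially discontinuous at $0$.

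\textbf{The construction.} I would write $e$ explicitly, using the $\bR^2$ factor of $\bR^2\times\bR^{n-2}$: take $e$ to be a small compactly supported symmetric $2$-tensor built from the profile $x^2|x^2|$, for instance (a small multiple of) $e=\chi(x)\,x^2|x^2|\,(dx^1)^2$ with $\chi$ a cutoff near $0$. Here $x^2|x^2|$ is $C^{1,1}$ with second derivative $2\operatorname{sgn}(x^2)$, which jumps across the hyperplane $\Sigma=\{x^2=0\}$. Then $k=\delta+e$ is a genuine $C^{1,1}$ metric (all second coordinate derivatives of $e$ are bounded), it is positive definite once $e$ is scaled small, and $e$ and $\partial e$ vanish at $0$, so $|e|^2_\delta=O(|x|^2)$ (in fact $O(|x|^4)$). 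Because $e$ and $\partial e$ vanish at $0$, the Christoffel--Christoffel and $g^{-1}$ factors entering the curvature of $k$ are continuous and vanish at $0$, so $\operatorname{Riem}(k)=L(\partial^2 e)+(\text{continuous near }0)$, where $L$ is the linearization of the Riemann tensor at $\delta$; consequently $\operatorname{Riem}(k)$, hence $W(k)$, has a genuine jump across $\Sigma$ near $0$, equal to $L$ (resp.\ its Weyl projection) applied to the jump of $\partial^2 e$.

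\textbf{Surviving into the invariant; the case $n=3$.} For $n\ge 4$ one only needs the algebraic Weyl projection of the jump of $\partial^2 e$ to be nonzero, a pointwise linear-algebra condition easily arranged (e.g.\ the above $e$ produces a jump concentrated in the $\partial_2^2 e_{11}$ components, whose Weyl part is a nonzero multiple of $\operatorname{sgn}(x^2)$ near $0$); then $W(k)$ is not a.e.\ equal to any continuous tensor, contradiction. For $n=3$ the Weyl tensor vanishes identically, so I would instead use the Cotton tensor $C$, which in dimension three is conformally invariant and diffeomorphism natural. Now $C$ is only a distribution for a $C^{1,1}$ metric, but the point is sharper: if $\Rc(k)$ has a genuine jump across $\Sigma$ then the Schouten tensor jumps and $C(k)$ contains a nonzero surface-measure term on $\Sigma$, whereas a $C^2$ metric has continuous Schouten tensor and hence a Cotton tensor with no surface-measure part. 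Moreover this surface-measure part is unchanged by a $C^{1,1}$ conformal change (the extra terms involve the Hessian of $f$, whose antisymmetrization vanishes, together with Lipschitz terms carrying no surface measure) and is carried to the preimage hypersurface under a $C^2$ diffeomorphism. So one arranges, by the same recipe, that the jump of $\Rc(k)$ across $\Sigma$ be nonzero, which in three dimensions automatically forces a nonzero surface-measure term in $C(k)$ and again yields a contradiction.

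\textbf{Main obstacle.} The delicate part is the interface between the explicit construction and the invariance argument: one must (i) track the regularity inherited by $f$ and the coordinate change $\Psi$ so that the conformal-transformation and naturality identities for $W$ (resp.\ $C$) hold almost everywhere, resp.\ distributionally, rather than only formally; and (ii) verify that the prescribed jump in $\partial^2 e$ genuinely persists in the conformally invariant part of the curvature and is not cancelled by the quadratic lower-order curvature terms — this is exactly where the vanishing of $e$ and $\partial e$ at $0$ is used, to force those terms to be continuous at the origin. The $n=3$ case is the subtlest, since the relevant invariant lives one derivative above $L^\infty$ and the whole argument must be carried out at the level of distributions supported on a hypersurface.
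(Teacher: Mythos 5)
Your proposal is correct in its essentials and rests on the same two invariants as the paper's proof --- the Weyl tensor for $n\ge 4$ and the Cotton tensor for $n=3$, combined with conformal invariance and naturality --- but the execution differs in two substantive ways. First, the paper builds the metric as a one-sided warped product $k_2=(dx^1)^2+\fhat(x^1)^2(dx^2)^2$ with $\fhat$ smooth for $x^1>0$ and identically $1$ for $x^1<0$, which permits an \emph{exact} computation of $W_{1221}=-\tfrac{n-3}{n-1}ff''$ and of $C_{221}=\tfrac1{n-1}(f'f''-ff''')$ on each side of the interface; you instead perturb by $\chi\,x^2|x^2|\,(dx^1)^2$ and argue via the linearization of the curvature at $\delta$, using that $e$ and $\overline\nabla e$ are continuous (in fact vanish on the interface $\Sigma=\{x^2=0\}$ --- note it is vanishing on $\Sigma$, not just at the origin, that makes the $\Gamma*\Gamma$ and $g^{-1}$ contributions continuous across $\Sigma$). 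Your assertion that the Weyl projection of the resulting jump is nonzero is exactly the paper's $\tfrac{n-3}{n-1}$ computation, since your curvature jump is likewise concentrated on the $x^1x^2$-plane; this should be computed rather than asserted, but it does come out nonzero for $n\ge4$. Second, for $n=3$ your argument is genuinely different and in one respect sharper: the Cotton tensor of a $C^2$ metric is only a distribution (the antisymmetrized derivative of the continuous Schouten tensor), so a jump discontinuity in its absolutely continuous part --- which is what the paper exhibits --- is not by itself an obstruction to $C^2$ regularity, whereas the surface-measure part you isolate, produced by the jump of the Schouten tensor across $\Sigma$ and checked to survive the antisymmetrization (i.e.\ $[P]$ is not a multiple of $\nu\otimes\nu$), genuinely cannot arise from a continuous Schouten tensor and is stable under $C^{1,1}$ conformal changes since antisymmetrized third derivatives of $f$ vanish distributionally. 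What your route buys is a cleaner distributional obstruction in dimension three; what the paper's route buys is that everything is an explicit closed-form computation with no distribution theory. The remaining soft spot, common to both arguments, is the regularity assumed of the coordinate change $\Psi$ (you restrict to smooth $\Psi$; the paper is silent), and the justification that the pointwise conformal transformation laws persist almost everywhere for $C^{1,1}$ data --- these are standard but should be said once carefully.
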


\begin{proof}
Let $f\colon[0,\infty)\to\bR$ be a smooth function that satisfies
\begin{equation*}
f(0)= 1, \quad f'(0) = 0, \quad f''(0) \ne 0, \quad f'''(0) \ne 0,
\end{equation*}
(for example, $f(t) = 1+t^2+t^3$),
and define a $C^{1,1}$ function $\fhat\colon\bR\to\bR$ by
\begin{equation*}
\fhat(t) = \begin{cases}
f(t), &t\ge 0,\\
1, &t<0.
\end{cases}
\end{equation*}
Then let $k_2$ be the following warped-product metric on $\bR^2$:
\begin{equation*}
k_2 = (dx^1)^2 + \fhat\big(x^1\big)^2\, (dx^2)^2,
\end{equation*}
and let $k$ be the product metric on $\bR^n = \bR^2\times\bR^n$ defined by
\begin{equation*}
k = k_2 \oplus \delta_{n-2},
\end{equation*}
where $\delta_{n-2} = \sum_{\alpha=3}^n (dx^\alpha)^2$ is the Euclidean metric on $\bR^{n-2}$.

We will write the Riemann, Ricci, and scalar curvatures of $k$ as $Rm$, $Rc$, and $S$.  Recall that the Schouten tensor $P$, Cotton tensor $C$, 
and Weyl tensor $W$ are defined by the formulas
\begin{align*}
P_{ij} &= \frac{1}{n-2} \left( Rc_{ij} - \frac{S}{2(n-1)} g_{ij} \right) \\
C_{ijk} &= P_{ij,k} - P_{ik,j} \\
W_{ijkl} &= R_{ijkl} - \left( P \owedge g \right)_{ijkl}\,
\end{align*}
where $\owedge$ is the Kulkarni-Nomizu product,
defined for symmetric $2$-tensors $a,b$ by 
\begin{equation*}
(a\owedge b)_{ijkl} = a_{il}b_{jk} + a_{jk}b_{il} - a_{ik}b_{jl} - a_{jl}b_{ik}.
\end{equation*}

In the computations that follow, the indices $1$ and $2$ refer to $x^1$ and $x^2$, and Greek indices refer to the 
coordinates $x^3,\dots,x^n$.
When $x^1>0$, the nonzero Christoffel symbols of $k_2$ are
\begin{equation}\label{eq:Gammxyy}
\Gamma_{12}^2 = \Gamma_{21}^2 = \frac{f'}{f}, \qquad 
\Gamma_{22}^1 = -ff',
\end{equation}
and its nonzero curvature components are 
\begin{equation}\label{eq:Rxyyx}
R_{1221} = R_{2112} = - R_{1212} = - R_{2121} = -ff'' .
\end{equation}
For the product metric $k$, 
the Christoffel symbols $\Gamma_{ij}^k$ and
curvature components $R_{ijkl}$ are all zero
if any of the indices is greater than $2$ or if $x^1<0$, 
and the nonzero ones  when $x^1>0$ are 
given by \eqref{eq:Gammxyy} and \eqref{eq:Rxyyx}.
Thus when $x^1>0$, the metric $k$ has curvatures given by
\begin{align*}
Rc &= - \frac{f''}{f} (dx^1)^2 - f f'' (dx^2)^2 \\
S &= - 2 \frac{f''}{f},\\
P &= -\frac{1}{n-1} \frac{f''}{f} (dx^1)^2 
- \frac{1}{n-1} f f''(dx^2)^2.
\end{align*}

To analyze the effect of a conformal change, 
consider the following component of the Weyl tensor for $x^1>0$:
\begin{align*}
W_{1221} &= R_{1221} - P_{11}g_{22} - P_{22}g_{11} + 2 P_{12}g_{12}\\
&= -ff'' + \frac{1}{n-1}\,\frac{f''}{f} f^2 + \frac{1}{n-1}ff''\\
&= -\frac{n-3}{n-1} f f''.
\end{align*}
When $n>3$, this is discontinuous at $x^1=0$, and will still be
discontinuous after multiplying $k$ by any conformal factor, so
no conformal multiple of $k$ can be improved to class $C^2$ in
any neighborhood of the origin by any
choice of coordinates.

For the $n=3$ case, we need to check the Cotton tensor.
For $x^1>0$, we have
\begin{align*}
P_{22,1} &= \partial_1 P_{22} - 2\Gamma_{12}^2 P_{22} \\
&= \left( - \frac{1}{n-1} f f'' \right)' - 2 \left(- \frac{1}{n-1} f f'' \right)\left( \frac{f'}{f} \right) \\
&= \frac{1}{n-1} \left( f' f'' - f f''' \right).
\end{align*}  
A similar computation shows that $P_{21,2}=\partial_2 P_{21}-\Gamma _{22}^1 P_{11} - \Gamma_{12}^2 P_{22}=0$, so 
$C_{221} = P_{22,1} - P_{21,2} = P_{22,1}$, which is discontinuous
at $x^1=0$. Since the Cotton tensor is invariant in $3$ dimensions
under a conformal change of metric, this shows that $k$
cannot be smoothed by a conformal or coordinate change in that dimension either.
\end{proof}

The pullback of the round metric on $\bS^n$ via the inverse of stereographic projection is given by
\[ \grd = \frac{4}{(1 + |x|^2)^2} \delta, \]
where $\delta =(dx^1)^2 + \cdots +(dx^n)^2 $ is the Euclidean metric on $\bR^n$.
Let $\phi: \bR^n \to [0,1]$ be a smooth radial cutoff function such that $\phi \equiv 1$ for $|x| \leq 1$ and $\phi$ is supported in $|x| \leq 2$.  Also, for $\lambda > 0$ introduce a dilation operator $D_{\lambda}: \bR^n \to \bR^n$ by $D_{\lambda}(x) = \lambda x$.  

\begin{prop}\label{prop:glambda}
For any $\lambda > 0$, the metric $g_{\lambda}$ on $\bR^n$ defined by
\begin{equation}
g_{\lambda} :=  \frac{4 }{(1+|x|^2)^2}\left(  \frac{1}{\lambda^2} \phi(x) D_{\lambda}^* k + (1-\phi(x)) \delta \right),
\end{equation}
pulls back via stereographic projection to a $C^{1,1}$ metric $\ghat_\lambda$ on $\bS^n$.  Moreover, $\ghat_{\lambda} \to \grd$ in the $C^{1,1}$ norm \(measured with respect to $g_0$\) as $\lambda \to 0$.
No conformal multiple of $\ghat_\lambda$ can be improved to class $C^2$
by any change of coordinates.
\end{prop}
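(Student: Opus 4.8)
The plan is to work in the stereographic chart, where the round metric $\grd$ is $\tfrac{4}{(1+|x|^2)^2}\delta$ and $\ghat_\lambda$ is $g_\lambda$, and first to put $g_\lambda$ in a transparent form. Since $D_\lambda(x)=\lambda x$, the components of $\tfrac1{\lambda^2}D_\lambda^*k$ at $x$ are the components of $k$ at $\lambda x$, so writing $k=\delta+e$ we get $\tfrac1{\lambda^2}D_\lambda^*k=\delta+e_\lambda$ with $e_\lambda(x):=e(\lambda x)$; because $\phi(\delta+e_\lambda)+(1-\phi)\delta=\delta+\phi\,e_\lambda$, this yields
\[
g_\lambda=\frac{4}{(1+|x|^2)^2}\bigl(\delta+\phi\,e_\lambda\bigr),\qquad
g_\lambda-\grd=\frac{4}{(1+|x|^2)^2}\,\phi\,e_\lambda ,
\]
where $\phi\,e_\lambda$ is a $C^{1,1}$ tensor field supported in $\{|x|\le 2\}$.

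From this, regularity and positivity are immediate. On $\{|x|\le 2\}$ the tensor $g_\lambda$ is $C^{1,1}$ because $e\in C^{1,1}(\bR^n)$ while $\phi$ and the conformal factor are smooth; outside $\{|x|\le 2\}$ we have $g_\lambda=\grd$, so $\ghat_\lambda$ agrees with the round metric near the north pole and is in particular smooth there. Positive definiteness for \emph{every} $\lambda>0$ holds because $\tfrac1{\lambda^2}\phi\,D_\lambda^*k+(1-\phi)\delta$ is pointwise a convex combination of the positive-definite forms $\tfrac1{\lambda^2}D_\lambda^*k$ and $\delta$. Hence $\ghat_\lambda\in C^{1,1}(\bS^n;\Sigma^2)$. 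For the convergence I would use that $e$ vanishes to second order at the origin and is $C^{1,1}$, so $e(0)=0$, $\partial e(0)=0$ with $\partial e$ Lipschitz, and $\partial^2 e\in L^\infty$; rescaling then gives $|e_\lambda|=O(\lambda^2)$, $|\partial e_\lambda|=\lambda\,|(\partial e)(\lambda x)|=O(\lambda^2)$, and $|\partial^2 e_\lambda|=\lambda^2\,|(\partial^2 e)(\lambda x)|=O(\lambda^2)$ on $\{|x|\le 2\}$, whence $\|e_\lambda\|_{C^{1,1}(\{|x|\le 2\})}=O(\lambda^2)$. Multiplying by the fixed smooth factor $\tfrac{4}{(1+|x|^2)^2}\phi$ preserves this, and since $g_\lambda-\grd$ is supported in the fixed compact set $\pi(\{|x|\le 2\})\subset\bS^n\smallsetminus\{N\}$, on which the $\grd$-norm $C^{1,1}$ is equivalent to the coordinate $C^{1,1}$ norm, we conclude $\ghat_\lambda\to\grd$ in $C^{1,1}(\bS^n,\grd)$, at rate $O(\lambda^2)$.

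The heart of the proof is the last assertion. Near $x=0$ one has $\phi\equiv 1$, so there $\ghat_\lambda=\tfrac{4}{\lambda^2(1+|x|^2)^2}\,D_\lambda^*k$ is a positive, \emph{smooth} conformal multiple of the pullback of $k$ under the smooth diffeomorphism $D_\lambda$. I would argue by contradiction: if some conformal multiple $\mu\,\ghat_\lambda$ (with $\mu>0$) became a $C^2$ metric after a coordinate change on a neighbourhood of the image of $x=0$, then composing that coordinate change with stereographic projection and with $D_{1/\lambda}$, and absorbing the resulting smooth positive factor, one exhibits a conformal multiple of $k$ that becomes a $C^2$ metric after a coordinate change on a neighbourhood of $0\in\bR^n$ — contradicting Lemma~\ref{lemma:R2D2}. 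The only point to verify is that the composition of a conformal rescaling with a smooth coordinate change is again a conformal rescaling followed by a coordinate change, with the factor $\tfrac{4}{\lambda^2(1+|x|^2)^2}$ and the map $D_\lambda$ merged in; this is routine, and the genuine content — that the conformally invariant obstruction (the Weyl tensor when $n>3$, the Cotton tensor when $n=3$) is discontinuous and remains so under any such composition — is exactly what was established in Lemma~\ref{lemma:R2D2}.

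The main obstacle is therefore this last step, and specifically the bookkeeping involved in reducing the global statement on $\bS^n$ to the local model at the origin; once that reduction is carried out carefully, no geometric input beyond Lemma~\ref{lemma:R2D2} is needed.
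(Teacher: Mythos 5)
Your proposal is correct and follows essentially the same route as the paper: rewrite $\tfrac1{\lambda^2}D_\lambda^*k$ as $\delta$ plus a perturbation whose components and first two coordinate derivatives are $O(\lambda^2)$ on the support of $\phi$, and reduce the non-improvability claim to Lemma~\ref{lemma:R2D2}. You merely spell out two points the paper leaves implicit --- positive definiteness via the pointwise convex combination, and the pullback/contradiction bookkeeping that transfers the conformal obstruction from $k$ at the origin to $\ghat_\lambda$ --- both of which are handled correctly.
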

\begin{proof}
Since 
\[ \frac{1}{\lambda^2} D_{\lambda}^* k = \delta + \big(\fhat( \lambda x^1 )^2-1\big) (dx^2)^2, \]
one finds that
\[ g_{\lambda} = g_0 + \frac{4}{(1+|x|^2)^2}\phi(x) \big(\fhat( \lambda x^1 )^2-1\big) (dx^2)^2. \]
By the construction of $\fhat$ given in Lemma \ref{lemma:R2D2}, $g_{\lambda}$ is $C^{1,1}$ for all $\lambda > 0$.  Since $\phi$ is compactly supported, $g_{\lambda}$ pulls back to a $C^{1,1}$ metric on $\bS^n$ under stereographic projection.  

Since $\fhat( \lambda x^1 )^2-1$ and its first two
coordinate derivatives are uniformly bounded by a multiple of $\lambda^2$ on the support of $\phi$,
it is straightforward to check that  $\ghat_{\lambda} \to g_0$ in $C^{1,1}$.
\end{proof}

\section{Einstein metrics}
\label{sec:einstein}

The Einstein equation is not elliptic, and thus following the strategy in \cite{GrahamLee, Lee} we will work with a gauge-broken  equation.  Let $g_0$ denote a conformally compact reference metric and let $\Delta_{gg_0}(Id)$ denote the harmonic map Laplacian from $(M,g)$ to $(M,g_0)$.  Let $\delta_g$ denote the divergence operator of $g$ and $\delta_g^*$ its formal adjoint.  Then the equation
\begin{equation} \label{eqn:defn-of-Q}
Q(g,g_0) = \Rc(g) + ng - \delta_{g_0}^*( \Delta_{g g_0} Id ) = 0
\end{equation}
is a quasilinear elliptic equation for $g$.  The differentiability of $Q$ on the spaces $C^{k,\alpha}_{\mu}(M; \Sigma^2(M))$ was established in \cite[Lemma 8.4]{Lee}.  Take $M = \bB^{n+1}$ and $g_0 = h$.  The linearization of equation \eqref{eqn:defn-of-Q} in $g$  at the hyperbolic metric is
\[ (D_1 Q)_{(h,h)} v = \frac{1}{2} (\Delta^h_L  + 2n ) v, \]
where $\Delta_L$ is the Lichnerowicz Laplacian.  Using the inverse function theorem we will show that equation \eqref{eqn:defn-of-Q} has an asymptotically hyperbolic solution with any prescribed $C^{1,1}$ conformal infinity sufficiently close to the standard round metric.  A maximum principle argument (see Proposition \ref{prop:max-principle-arg} below) then allows us to conclude that this metric is Einstein.

Here is the basic analytic fact we will need.
\begin{prop} \label{prop:Jack-Fredholm-result}
\[ \Delta_L^h + 2n: C^{2,\alpha}_{\mu}(M; \Sigma^2(M)) \longrightarrow C^{0,\alpha}_{\mu}(M;\Sigma^2(M)) \]
is an isomorphism if and only if $\mu \in (0,n)$.
\end{prop}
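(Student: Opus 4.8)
The plan is to obtain this as an instance of the general Fredholm and isomorphism theory for geometric elliptic operators on conformally compact manifolds developed in \cite{Lee}, applied to the operator $\Delta_L^h + 2n$ acting on sections of $\Sigma^2(M)$ over hyperbolic space $(M,h) = \mathbb{H}^{n+1}$. The first step is to observe that this operator falls under that theory: it is a second-order elliptic operator whose principal part is a constant multiple of the rough Laplacian of $h$, it is natural (built from $h$ and its curvature), and it is formally self-adjoint in $L^2(M;h)$. The results of \cite{Lee} then reduce the proposition to two inputs: the location of the indicial roots of $\Delta_L^h + 2n$, and the triviality of its $L^2$-kernel.

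The main step, and the one I expect to be the principal obstacle, is to show that the indicial radius of $\Delta_L^h + 2n$ is $n/2$; equivalently, that it has no indicial roots in the open interval $(0,n)$ while $0$ and $n$ are themselves indicial roots. I would pass to the indicial operator at a boundary point: writing $h = \rho^{-2}\big(d\rho^2 + \hat g_\rho\big)$ near $\partial M$ and decomposing a symmetric $2$-tensor into its irreducible pieces under the boundary $SO(n)$-action---the pure normal part, the mixed normal--tangential part, the tangential pure-trace part, and the tangential trace-free part---one finds that on each piece the indicial operator collapses to a scalar Euler-type second-order ODE in $\rho$, whose characteristic exponents are read off from the standard Weitzenböck formula for $\Delta_L$ specialized to the constant-curvature $-1$ metric (for which $\Rc(h) = -n\,h$ and the curvature operator acts as a constant). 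Collecting the exponents from the four pieces produces a set symmetric about $n/2$ whose innermost pair is $\{0,n\}$, which is exactly what is needed; this computation is carried out in \cite{Lee}.

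Granting the indicial radius $n/2$, the Fredholm theorem of \cite{Lee} shows at once that $\Delta_L^h + 2n\colon C^{2,\alpha}_\mu(M;\Sigma^2) \to C^{0,\alpha}_\mu(M;\Sigma^2)$ is Fredholm of index zero for every $\mu\in(0,n)$ and fails to be Fredholm at $\mu = 0$ and $\mu = n$. To upgrade this to an isomorphism on $(0,n)$ I would show the kernel is trivial there: if $v\in C^{2,\alpha}_\mu$ with $(\Delta_L^h + 2n)v = 0$ and $\mu\in(0,n)$, then since there are no indicial roots in $(0,n)$ the boundary-regularity results of \cite{Lee} force $v = O(\rho^{\mu'})$ for every $\mu' < n$; since $\rho^{\mu'}\in L^2(M;h)$ once $\mu' > n/2$, we get $v\in L^2$, and triviality of the $L^2$-kernel of $\Delta_L^h + 2n$ on $\mathbb{H}^{n+1}$ (established in \cite{Lee}, e.g.\ by reducing to the scalar and $1$-form cases via the tensor Hodge decomposition) gives $v = 0$. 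As the operator is formally self-adjoint with vanishing index on $(0,n)$, triviality of the kernel yields surjectivity, hence an isomorphism for all $\mu\in(0,n)$. For the converse: at the endpoints $\mu\in\{0,n\}$ the operator is not Fredholm, hence not invertible; and for $\mu\notin[0,n]$ the index theory of \cite{Lee} shows it has nontrivial kernel when $\mu<0$ (lowering the weight past the indicial root $0$ strictly enlarges the kernel) and, by formal self-adjointness, nontrivial cokernel when $\mu>n$ (the cokernel at weight $\mu$ being the kernel at weight $n-\mu<0$), so in neither case is it an isomorphism. This establishes the stated equivalence---and in practice one simply invokes the corresponding isomorphism theorem of \cite{Lee} for $\Delta_L^h+2n$.
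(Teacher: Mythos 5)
Your proposal is correct and follows essentially the same route as the paper: both reduce the statement to the Fredholm/isomorphism machinery of \cite{Lee} (Theorem C), use the fact that the indicial radius of $\Delta_L^h+2n$ is $n/2$ (Proposition D of \cite{Lee}), and conclude by the triviality of the $L^2$ kernel established in the proof of Theorem A of \cite{Lee}. The paper's proof is simply a more compressed version of yours, citing these three inputs directly without spelling out the indicial-root computation or the converse direction.
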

\begin{proof}
This is a simple application of Theorem C of \cite{Lee} applied to the operator $P := \Delta_L^h + 2n$.  To see this, first note by Proposition D of \cite{Lee}, the indicial radius of $P$ is $R = \frac{n}{2}$.  So Theorem C allows us to conclude that $P$ is Fredholm if $| \mu - \frac{n}{2} | < \frac{n}{2}$, or $\mu \in (0,n)$.  Moreover the Fredholm index is zero, and the kernel of $P$ is equal to the $L^2$ kernel of $P$.
However $P$ has trivial $L^2$ kernel, as shown in the proof of Theorem A of \cite{Lee}.
\end{proof}

To find a solution to the gauge-broken Einstein equation, we begin by 
showing that $Q( T(\ghat), T(\ghat) )$ already decays to second order.

\begin{lemma} \label{lemma:Q-first-approx} For any $\ghat \in C^{1,1}(\partial M; \Sigma^2(\partial M))$,
$Q( T(\ghat), T(\ghat) ) \in C^{0,\alpha}_2(M).$
\end{lemma}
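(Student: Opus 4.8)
The plan is to analyze the structure of $g=T(\ghat)$ near the boundary and exploit the fact that the gauge-broken operator $Q$ annihilates the hyperbolic metric $h$, so that $Q(g,g)$ is controlled by the difference $g-h$, which we already know decays. Recall from the definition of $T$ that
\[
g = T(\ghat) = h + \rho^{-2} R\bigl(E(\ghat-\hhat)\bigr),
\]
and from Lemma~\ref{lemma:properties-of-T} (and the proof of Proposition~\ref{prop:regularization-map}) that $\gbar=\rho^2 g = d\rho^2 + E(\ghat) + C^{1,1}_{2+2}(M)$; equivalently $\gbar-\hbar \in \fancy^{k,\alpha;2}(M)$ with leading term $E(\ghat-\hhat)=O(1)$ but no better. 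The key point is that $g-h = \rho^{-2}(\gbar-\hbar)$ sits in $C^{k,\alpha}_2(M;\Sigma^2)$ for every $k,\alpha$, by Proposition~\ref{prop:regularization-map} together with Lemma~\ref{lemma:tensor-mapping-plain-Holder}, since $R(E(\ghat-\hhat))\in\fancy^{k,\alpha;2}(M)\subset C^{k,\alpha}_2(M)$.

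First I would write $Q(g,g) = Q(g,g) - Q(h,h)$, using $Q(h,h)=0$ (the hyperbolic metric is Einstein and in harmonic gauge relative to itself). Then I would apply the mean-value form of Taylor's theorem to the map $g\mapsto Q(g,g)$ along the segment $g_t = h + t(g-h)$, $t\in[0,1]$:
\[
Q(g,g) = \int_0^1 \frac{d}{dt}Q(g_t,g_t)\,dt = \int_0^1 (D Q)_{g_t}(g-h)\,dt,
\]
where $DQ$ denotes the total derivative of $g\mapsto Q(g,g)$ (both the "metric" slot and the "reference" slot vary). By the differentiability results for $Q$ on weighted Hölder spaces quoted from \cite[Lemma~8.4]{Lee}, $DQ$ is a second-order quasilinear operator whose coefficients are smooth functions of $g_t$ and its background-coordinate derivatives up to order one; since $g_t\to h$ in $C^{k,\alpha}(M)$ as $\ghat\to\hhat$ and in any case $g_t$ stays in a bounded set of $C^{2,\alpha}(M)$, these coefficients are uniformly bounded in $C^{0,\alpha}(M)$. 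A linear second-order operator with $C^{0,\alpha}$ coefficients maps $C^{2,\alpha}_2(M)$ continuously into $C^{0,\alpha}_2(M)$ — differentiation does not change the weight index in these spaces, and multiplication by a $C^{0,\alpha}_0$ function preserves $C^{0,\alpha}_2$ by Lemma~\ref{lemma:tensor-mapping-plain-Holder}(b). Since $g-h\in C^{2,\alpha}_2(M)$, each integrand lies in $C^{0,\alpha}_2(M)$ with uniformly bounded norm, and therefore so does the integral.

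The main obstacle is making precise that $DQ$ does not lower the weight: one must check that the off-diagonal structure of $Q$ — in particular the gauge term $\delta_{g_0}^*(\Delta_{g g_0}\mathrm{Id})$, which involves Christoffel symbols of both $g$ and $g_0=g_t$ — produces, upon linearization, only coefficients that are bounded (weight $0$) times derivatives of $g-h$ of order at most two, with no inverse powers of $\rho$ beyond those already absorbed into the weighted-space bookkeeping. The cleanest way is to invoke directly the computation in \cite[Lemma~8.4]{Lee} (or the analogous estimate there), which states that $g\mapsto Q(g,g_0)$ is $C^\infty$ as a map between the weighted spaces $C^{k,\alpha}_\mu$ and $C^{k-2,\alpha}_\mu$ on a neighborhood of $h$, with derivatives bounded in operator norm; granting that, the argument above goes through verbatim. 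A minor point to dispatch along the way is that $g_t$ remains a genuine metric (positive definite) for all $t\in[0,1]$ and all $\ghat$ near $\hhat$, which follows from $g_t = h + t(g-h)$ and $g-h\to 0$ in $C^0(M)$, exactly as in the proof of Lemma~\ref{lemma:properties-of-T}\eqref{part:TweaklyAH}.
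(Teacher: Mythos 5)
There is a genuine gap, and it sits at the very first step: the claim that $g-h=\rho^{-2}R\bigl(E(\ghat-\hhat)\bigr)$ lies in $C^{k,\alpha}_2(M;\Sigma^2)$. Proposition \ref{prop:regularization-map} gives $R\bigl(E(\ghat-\hhat)\bigr)\in\fancy^{k,\alpha;2}(M)\subset C^{k,\alpha}_2(M)$, but multiplying by $\rho^{-2}$ \emph{lowers} the weight by $2$ (since $\|u\|_{C^{k,\alpha}_\delta}=\|\rho^{-\delta}u\|_{C^{k,\alpha}}$, equivalently by Lemma \ref{lemma:tensor-mapping-plain-Holder}(b) with $\rho^{-2}\in C^{k,\alpha}_{-2}$). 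So $g-h\in C^{k,\alpha}_0(M)$ only. This is not a technicality you can tighten: $g-h$ genuinely does not decay, because it changes the conformal infinity from $\hhat$ to $\ghat$, an $O(1)$ perturbation of the metric measured in the hyperbolic scale. With the correct weight, your Taylor expansion $Q(g,g)=\int_0^1(DQ)_{g_t}(g-h)\,dt$ only shows $Q(g,g)\in C^{0,\alpha}_0(M)$, i.e.\ boundedness, which falls two orders short of the assertion.

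The missing content is precisely why the lemma is nontrivial: the decay of $\Rc(g)+ng$ to weight $2$ is not a consequence of $g$ being close to $h$, but of the special structure of $T(\ghat)$ near $\partial M$ --- namely that $\gbar=d\rho^2+P^*\ghat+Z$ with $Z\in C^{1,1}_4(M)$ and with two covariant derivatives of $\gbar$ controlled in the $\fancy^{k,\alpha;2}$ sense. The paper exploits this by observing that the gauge term vanishes identically when both arguments of $Q$ agree, so only $\Rc(g)+ng$ must be estimated, and then invoking Theorems 1.3 and 1.4 of \cite{WAHM}: for $g\in\fancym^{2,\alpha;2}$ one has $\Rc(g)+ng\in C^{0,\alpha}_2$ \emph{provided} the scalar quantity $|d\rho|^2_{\gbar}-1-\frac{2}{n+1}\rho\Delta_{\gbar}\rho$ is $O(\rho^2)$; verifying that condition is a short computation with the components of $\gbar$. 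If you want to avoid citing those theorems, you would have to reproduce their content (an expansion of the Ricci tensor in which the weight-$0$ and weight-$1$ contributions of $\gbar-\hbar$ cancel against $ng$ under the stated condition), which a linearization of $Q$ around $h$ in the plain weighted spaces cannot see.
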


\begin{proof}
For convenience set $g = T(\ghat)$.  Since both arguments of $Q$ are the same, the gauge term is zero.  We thus 
need to check that $\Rc(g) + ng$ lies in the prescribed space.  Since $g$ is in the image of $T$, 
Lemma \ref{lemma:properties-of-T} shows that $g\in\fancym^{2,\alpha;2}$.  
Therefore by Theorems 1.3 and 1.4 of \cite{WAHM}, it suffices to check that
\[ |d \rho|^2_{\gbar} - 1 - \frac{2}{n+1} \rho \Delta_{\gbar} \rho = O(\rho^2), \]
for then $\Rc(g) + ng \in C^{0,\alpha}_2(M)$.

Since $ g = h + \rho^{-2} RE( \ghat-\hhat )$,  near $\partial M$ (i.e., where the cutoff $\phi$ in the definition of $E$ satisfies $\phi \equiv 1$) we have
\begin{align*}
 \gbar &= \hbar +  R P^*( \ghat-\hhat ) \\
 &= (d\rho^2 + P^*\hhat) + (P^*\ghat - P^*\hhat) + Z \\
 &= d\rho^2 + P^*\ghat + Z,
\end{align*}
where $Z \in C^{1,1}_{4}(M)$ by Lemma \ref{prop:regularization-map}. 
Writing $Z$ in background coordinates as $Z = Z_{ij}d\theta^i\,d\theta^j$ (with Roman indices 
running from $1$ to $n+1$), we see that $Z_{ij} = O(\rho^2)$ and $\partial_\rho Z_{ij} = O(\rho)$.  
Using a $\rho$ index to denote the $\theta^{n+1}=\rho$ direction and Greek indices
to denote $\theta^1,\dots,\theta^n$, we can write the 
components of $\gbar$ as
\begin{align*}
\gbar_{\rho\rho} & = 1 + Z_{\rho\rho},\\
\gbar_{\rho\beta} &= Z_{\rho\beta},\\
\gbar_{\alpha\beta} &= \ghat_{\alpha\beta} + Z_{\alpha\beta},
\end{align*}
with $\partial_\rho \ghat_{\alpha\beta}\equiv 0$.
It follows that $\gbar^{\rho\rho} = 1+O(\rho^2)$
and the Christoffel symbols of $\gbar$ are all $O(\rho)$.
Therefore,
\begin{equation*}
|d \rho|^2_{\gbar} - 1 - \frac{2}{n+1} \rho \Delta_{\gbar} \rho 
= \gbar^{\rho\rho} - 1+ \frac{2}{n+1} \rho \gbar^{ij} \overline\Gamma^\rho_{ij} = O(\rho^2),
\end{equation*}
as claimed.
\end{proof}

We also need the following lemma.
\begin{lemma}
Let $\ghat \in C^{1,1}(\partial M; \Sigma^2(\partial M))$, and let $r \in C^{2,\alpha}_2(M;\Sigma^2(M))$. Then
\[ Q( T(\ghat) + r, T(\ghat) ) \in C^{0,\alpha}_2(M;\Sigma^2(M)). \]
\end{lemma}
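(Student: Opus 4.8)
The plan is to reduce this statement to the previous lemma by analyzing how $Q(T(\ghat)+r,T(\ghat))$ differs from $Q(T(\ghat),T(\ghat))$, which we already know lies in $C^{0,\alpha}_2(M)$ by Lemma~\ref{lemma:Q-first-approx}. Write $g = T(\ghat)$ and $\tilde g = g + r$. Since the second argument of $Q$ is unchanged, the gauge term $\delta_{g_0}^*(\Delta_{\tilde g g_0} Id)$ differs from $\delta_{g_0}^*(\Delta_{g g_0} Id)$ only through the dependence of the harmonic map Laplacian on the domain metric; and $\Rc(\tilde g) + n\tilde g$ differs from $\Rc(g) + ng$ through the (nonlinear) dependence of Ricci on the metric plus the term $nr$. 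So it suffices to show that
\[
Q(g+r,g_0) - Q(g,g_0) \in C^{0,\alpha}_2(M;\Sigma^2(M)),
\]
with $g_0 = g = T(\ghat)$.

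First I would recall from \cite[Lemma 8.4]{Lee} that $Q(\cdot,g_0)$ is a smooth map between the weighted H\"older spaces $C^{2,\alpha}_\mu(M;\Sigma^2(M)) \to C^{0,\alpha}_\mu(M;\Sigma^2(M))$ in a neighborhood of any admissible metric, for $\mu$ in the relevant range. Here the subtlety is the weight bookkeeping: $g = T(\ghat)$ itself is not a small perturbation of $h$ in a weighted space — rather $\bar g = \rho^2 g$ is a bounded perturbation of $d\rho^2 + P^*\hhat$ — but $r \in C^{2,\alpha}_2(M)$ is genuinely decaying. The point is that $Q(g+r,g_0) - Q(g,g_0)$ is, by the mean value theorem applied along the segment $g + tr$, expressible as $\int_0^1 (D_1 Q)_{(g+tr,g_0)} r\, dt$, and each $(D_1 Q)_{(g+tr,g_0)}$ is a second-order linear operator whose coefficients are built from $g+tr$ and its first two derivatives (all in the appropriate $\fancy$-type or plain H\"older spaces, since $g+tr \in C^{2,\alpha}(M)$ for all $t$) together with $g_0$. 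Feeding in $r \in C^{2,\alpha}_2(M)$ and using the tensor-product mapping property of weighted spaces (Lemma~\ref{lemma:tensor-mapping-plain-Holder}, part (b)) — coefficients of weight $0$ times $r$ of weight-class $2$ land in weight-class $2$ — gives $(D_1 Q)_{(g+tr,g_0)} r \in C^{0,\alpha}_2(M)$ uniformly in $t$, hence the integral is in $C^{0,\alpha}_2(M)$.

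Concretely, the steps are: (1) set $g = T(\ghat)$, note the gauge term cancels the $g_0$-dependence issue since $g_0 = g$ is fixed; (2) invoke Lemma~\ref{lemma:Q-first-approx} for the base point $Q(g,g) \in C^{0,\alpha}_2(M)$; (3) write $Q(g+r,g) - Q(g,g) = \int_0^1 (D_1 Q)_{(g+tr,g)}\, r\, dt$ using smoothness of $Q$ in its first slot on $C^{2,\alpha}$ (for which one needs $g + tr$ to remain positive definite and nondegenerate, which holds for $r$ small — but in fact we only need the algebraic/analytic form of $D_1 Q$, whose coefficients involve $(g+tr)^{-1}$, smooth in $t$ on a compact range once $\ghat$ is near $\hhat$ and $r$ is controlled); (4) identify $(D_1 Q)_{(g+tr,g)}$ as a second-order operator with coefficients of weight $0$ in the $C^{0,\alpha}$ (or $C^{1,\alpha}$, $C^{2,\alpha}$) scale — this is exactly the structure used to define the linearized operator in \cite{Lee, GrahamLee} — and apply the weighted product estimate to conclude $(D_1 Q)_{(g+tr,g)} r \in C^{0,\alpha}_2(M)$; (5) integrate in $t$ and add back $Q(g,g)$.

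The main obstacle, and the only nonroutine point, is step (4): verifying that the coefficients of $(D_1 Q)_{(g', g_0)}$ — for $g'$ in a bounded family of weakly asymptotically hyperbolic metrics of the form $T(\ghat) + tr$ — genuinely lie in the unweighted (weight-$0$) plain H\"older spaces, uniformly, so that multiplying by $r$ preserves the decay rate $\rho^2$ without loss. This is where one leans on the precise structure of $Q$ from \cite[\S8]{Lee} and \cite[\S3]{WAHM}: the Ricci-plus-$ng$ part and the harmonic-map-gauge part are each built polynomially from $\bar g'$, $(\bar g')^{-1}$, $\bar g_0$, $(\bar g_0)^{-1}$ and their $\bar\nabla$-derivatives, all of which sit in the $\fancy^{2,\alpha;2}$ scale (weight $0$ after accounting for the $\rho^{\pm 2}$ factors), so the tensor-product lemma applies. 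Once that bookkeeping is pinned down, the conclusion follows immediately.
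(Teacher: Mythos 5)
Your proposal is correct and follows essentially the same route as the paper's proof: write $Q(g+r,g)-Q(g,g)=\int_0^1 (D_1Q)_{(g+sr,g)}(r)\,ds$, invoke Lemma \ref{lemma:Q-first-approx} for the base term, observe from the explicit formula in \cite{GrahamLee} that $(D_1Q)_{(g+sr,g)}$ is uniformly degenerate with $C^{0,\alpha}(M)$ coefficients, and apply Lemma \ref{lemma:tensor-mapping-plain-Holder} to conclude it maps $C^{2,\alpha}_2(M)$ to $C^{0,\alpha}_2(M)$. The coefficient bookkeeping you flag as the one nonroutine point is exactly the step the paper also singles out.
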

\begin{proof}
For convenience set $g = T(\ghat)$.  Now
\[ Q( g + r, g) - Q(g,g) = \int_0^1 (D_1 Q)_{(g+sr,g)}( r) ds, \]
where $D_1Q$ is the derivative of $Q$ with respect to its first argument.
By Lemma \ref{lemma:Q-first-approx}, $Q(g,g) \in  C^{0,\alpha}_2(M;\Sigma^2(M))$, so it suffices to understand the term inside the integral.
The explicit formula for $(D_1 Q)_{(g+sr,g)} (r)$ appears as equation (2.15) in \cite{GrahamLee}.  Since $g+sr$ and $g$ lie in $C^{2,\alpha}(M)$, from this formula one checks that $(D_1 Q)_{(g+sr,g)}$ is a uniformly degenerate
operator with coefficients that (at worst) lie in $C^{0,\alpha}(M)$.  Combined with equation \eqref{eqn:tensor-mapping-plain-Holder} of Lemma \ref{lemma:tensor-mapping-plain-Holder} we conclude $(D_1 Q)_{(g+sr,g)}$ maps $C^{2,\alpha}_2(M)$ to $C^{0,\alpha}_2(M)$, completing the proof.
\end{proof}

\section{Proof of Theorem \protect\ref{thm:main} }
\label{sec:proof}

We now begin the proof of Theorem \ref{thm:main}.  To solve $Q(g,g_0) = 0$, we will apply the Banach inverse function theorem in the space $C^{2,\alpha}_2(M; \Sigma^2(M))$.  To this end, define an open subset
\begin{align*}
\sB & \subset C^{1,1}(\partial M; \Sigma^2(\partial M)) \times C^{2,\alpha}_{2}(M; \Sigma^2(M)) \; \; \mbox{by} \; \\
\sB & := \left\{ (\ghat, r): \ghat, T(\ghat), T(\ghat) + r \; \mbox{are positive definite} \right\}.
\end{align*}

Now define a map:
\begin{align*}
\cQ: C^{1,1}(\partial M; \Sigma^2(\partial M)) &\times C^{2,\alpha}_{2}(M; \Sigma^2(M))\\
 &\longrightarrow  C^{1,1}(\partial M; \Sigma^2(\partial M)) \times C^{0,\alpha}_{2}(M; \Sigma^2(M))
\end{align*}
by
\begin{align*}
\cQ (\ghat, r) &= \left( \ghat, Q\left( T(\ghat) + r, T(\ghat) \right) \right).
\end{align*}

Observe that $\cQ( \hhat, 0 ) = (\hhat, 0 )$.  The explicit calculation in the proof of Theorem A of \cite{Lee} shows that the linearization of $\cQ$ at $(\hhat, 0)$ is given by
\begin{align*}
D \cQ_{(\hhat,0)} (\qhat,r) &= ( \qhat, D_1 Q_{(h,h)}( DT_{\hhat} \qhat + r) + D_2 Q_{(h,h)} (DT_{\hhat} \qhat) ) \\
&= ( \qhat, (\Delta_L + 2n) r + K \qhat  ),
\end{align*}
where $K$ is defined by
\[ K \qhat := D_1 Q_{(h,h)}( DT_{\hhat} \qhat ) + D_2 Q_{(h,h)} (DT_{\hhat} \qhat) ). \]
By Proposition \ref{prop:Jack-Fredholm-result} if $\mu = 2$ and $n \geq 3$ we obtain that $\Delta_L + 2n$ is invertible.  So $D \cQ$ is invertible.  

The Banach inverse function theorem now shows that there is a neighbourhood of $(\hhat, 0)$ in $\sB$ on which $\cQ$ has a smooth inverse.  
We choose a boundary metric $\ghat=\ghat_\lambda$ given by 
Proposition \ref{prop:glambda} with $\lambda$ sufficiently small.
Thus there is a solution 
$r \in C^{2,\alpha}_2(M;\Sigma^2(M))$ such that
$\cQ( \ghat, r ) = (\ghat, 0)$.

Set $g = T(\ghat) + r$ and $g_0 = T(\ghat)$, so $Q( g, g_0 ) = 0$.  
Lemma \ref{lemma:properties-of-T} shows that $\rho^2 T(\ghat)\in \fancy^{2,\alpha;2}(M;\Sigma^2(M))$,
and $\rho^2 r \in C^{2,\alpha}_{2+2}(M;\Sigma^2(M))\subset \fancy^{2,\alpha;2}(M;\Sigma^2(M))$
by Lemma  \ref{lemma:properties}\eqref{part:regular-in-fancy}. 
Thus $\gbar\in \fancy^{2,\alpha;2}(M;\Sigma^2(M))\subset C^{1,1}(\Mbar;\Sigma^2(\Mbar))$, which means that 
$g$ has a $C^{1,1}$ conformal compactification.
Moreover, by \cite[Theorem 1.4]{WAHM}, $g$ has QHCD.   
By restricting to the boundary $T \partial M$ we find
\begin{align*}
 \gbar|_{T\partial M} &= \hbar|_{T\partial M} +  R E ( \ghat-\hhat )|_{T\partial M} \\
 &=  \hhat + E(\ghat - \hhat)|_{T\partial M} + Z|_{T\partial M} \\
 &= \ghat,
\end{align*}
where $Z \in C^{1,1}_4(M;\Sigma^2(M))$, and thus $Z|_{T\partial M} = 0$.  So $g$ has the prescribed conformal infinity $\ghat$.  
There can be no $C ^2$ conformal compactification of $g$, because it would induce a
smooth structure on $\partial M$ in which some conformal multiple of $\ghat$ is
of class $C^2$, which is ruled out by Proposition \ref{prop:glambda}.

The proof of Theorem \ref{thm:main} is then completed once we show that $g$ is Einstein.

\begin{prop} \label{prop:max-principle-arg}

For $\ghat$ sufficiently close to $\hhat$ in $C^{1,1}$ norm, the resulting solution $g$ of the linearized Einstein equation $Q(g,g_0) = 0$ is an Einstein metric.
\end{prop}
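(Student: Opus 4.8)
The plan is to run the standard Bianchi-gauge argument from \cite{GrahamLee, Lee}: once we have a solution $g$ of the gauge-broken equation $Q(g,g_0)=0$, we show that the "gauge field" $V := \Delta_{g g_0}(\mathrm{Id})$ vanishes identically, which forces $\Rc(g)+ng=0$. First I would recall that applying the Bianchi operator $\beta_g = \delta_g + \tfrac12 d\,\tr_g$ to the identity $\Rc(g)+ng = \delta_{g_0}^*V$, together with the contracted second Bianchi identity $\beta_g(\Rc(g)+ng)=0$, yields $\beta_g \delta_{g_0}^* V = 0$. Expanding $\beta_g\delta_{g_0}^*$ shows this is a second-order uniformly degenerate elliptic equation for $V$ of the schematic form $\tfrac12\nabla^*\nabla V - \Rc(g)\cdot V + (\text{lower order}) = 0$, i.e. a vector Laplacian-type equation; here I would cite the analogous computation in \cite{GrahamLee} (their Lemma 2.2 / proof of the main theorem) or \cite[Section 8]{Lee}.

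Next I would check the regularity and decay needed to run the maximum principle (or the Fredholm/injectivity argument) on this equation for $V$. Since $g, g_0 \in \mathscr M^{2,\alpha;2}_{\text{weak}}$ with QHCD, the operator $\beta_g\delta_{g_0}^*$ is a uniformly degenerate operator whose indicial roots control the $L^2$ / growth behavior of solutions; the relevant indicial radius for the $1$-form Laplacian on an asymptotically hyperbolic $(n+1)$-manifold is such that the only $L^2$ solution is $V\equiv 0$. Concretely: $DT_{\hhat}$ and the construction guarantee $g_0 = T(\ghat)$ is a graph over $h$ with $\rho^2(g_0-h)\in\fancy^{2,\alpha;2}$, so $V = \Delta_{g g_0}(\mathrm{Id})$ is a priori in a weighted space $C^{1,\alpha}_\mu$ with $\mu>0$ (this uses that $r\in C^{2,\alpha}_2$ and the $O(\rho)$ decay of $g-g_0$ and of the Christoffel difference). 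Then invoke the vanishing theorem for the vector operator: on an asymptotically hyperbolic manifold with sectional curvature near $-1$, a decaying solution of the relevant Bochner-type equation must vanish — either by a direct maximum principle applied to $|V|^2_g$ (using $\Rc(g)\le -n g + O(\rho^2)$ so the zeroth-order term has a favorable sign at infinity) or by quoting the injectivity statement analogous to Proposition \ref{prop:Jack-Fredholm-result} for the $1$-form operator.

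I expect the main obstacle to be the \emph{low regularity}: the coefficients of $\beta_g\delta_{g_0}^*$ involve two derivatives of $g$ and $g_0$, and $\gbar$ is only $C^{1,1}$ up to the boundary, so a naive application of boundary regularity / unique continuation is not available. The way around this is to work entirely in the \emph{interior} weighted Hölder spaces $C^{k,\alpha}_\mu(M)$, where $g \in C^{k,\alpha}(M)$ for all $k$ (Lemma \ref{lemma:properties-of-T}\eqref{part:Tmapping}), so the operator $\beta_g\delta_{g_0}^*$ has smooth interior coefficients and the uniformly degenerate elliptic theory of \cite{Lee} applies with no loss; the only place the $C^{1,1}$-only behavior enters is in pinning down the decay rate $\mu$ of $V$ near $\partial M$, and there QHCD (Lemma \ref{lemma:Q-first-approx} and \cite[Theorem 1.4]{WAHM}) supplies exactly the $O(\rho^2)$ control on the curvature that makes the zeroth-order term in the Bochner equation have the right sign. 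Having shown $V\equiv 0$, the equation $Q(g,g_0)=0$ collapses to $\Rc(g)+ng=0$, i.e. $g$ is Einstein with the prescribed negative scalar curvature, completing the proof of Proposition \ref{prop:max-principle-arg} and hence of Theorem \ref{thm:main}.
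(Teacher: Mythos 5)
Your proposal follows essentially the same route as the paper: apply the contracted Bianchi identity to $Q(g,g_0)=0$ to get an elliptic equation for the gauge $1$-form, then kill it with the generalized maximum principle applied to $|\omega|^2_g$ (Theorem 3.5 of \cite{GrahamLee}), using the interior smoothness of $g$ and $g_0=T(\ghat)$ to justify that $|\omega|^2_g$ is $C^2$ and bounded even though $\gbar$ is only $C^{1,1}$ up to the boundary. That is exactly the paper's argument.

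One point where your reasoning is slightly off target: you attribute the favorable sign of the zeroth-order term to QHCD, i.e.\ to $\Rc(g)\le -ng+O(\rho^2)$ ``at infinity.'' The maximum principle needs the differential inequality $\Delta^g|\omega|^2_g\le 2K|\omega|^2_g$ with $K<0$ \emph{globally} on $M$, not just near $\partial M$; curvature control at infinity alone does not rule out a bad sign in the interior. This is precisely where the hypothesis ``$\ghat$ sufficiently close to $\hhat$'' enters in the paper: continuity of $\ghat\mapsto T(\ghat)\mapsto\Rc(T(\ghat)+r)$ in the relevant norms forces $\Rc(g)$ to be strictly negative on all of $M$ when $\ghat$ is close to $\hhat$ in $C^{1,1}$, and that global negativity is what produces the negative constant $K$. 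Your write-up never uses the closeness hypothesis for this purpose, so as stated the sign argument has a gap; the fix is the paper's perturbation-off-hyperbolic-space estimate rather than the asymptotic curvature decay.
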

\begin{proof}
The proof follows Lemma 2.2 of \cite{GrahamLee} closely.  
To better match the notation of \cite{GrahamLee}, set $t = g_0$, and let 
$\omega$ be the gauge $1$-form
\begin{equation*}
\omega = (\Delta_{g t} Id)^\flat = gt^{-1}\delta_g\big( t - \tfrac 1 2 \tr(g^{-1} t) g\big).
\end{equation*}
The key idea is to show that $\omega$
vanishes by the maximum principle.  
By virtue of Lemma \ref{lemma:properties-of-T},
$t \in C^{3,\alpha}(M)$ for any $\alpha \in (0,1)$. This extra regularity is used in the maximum principle argument below.

The map $g\mapsto \Rc(g)$ is continuous from $C^{2,\alpha}(M;\Sigma^2(M))$ to $C^{0,\alpha}(M;\Sigma^2(M))$.  Since $T$ is continuous from $C^{1,1}(\partial M, \Sigma^2 (\partial M))$ to $C^{2,\alpha}(M;\Sigma^2(M))$, we can control the $L^{\infty}$ norm of $\Rc(g) - \Rc(h) = \Rc(g) + nh$ in terms of the $C^{1,1}$-norm of $\ghat-\hhat$.  Thus for $\ghat$ sufficiently close to $\hhat$, $\Rc(g)$ is strictly negative on $M$.

As in \cite[Lemma 2.2]{GrahamLee}, the Bianchi identity applied to $Q(g,t) = 0$ leads to the differential inequality
\[ \Delta^g |\omega|_g^2 \leq 2 K |\omega|^2_g, \]
for some negative constant $K$.
Since $t \in C^{3,\alpha}(M;\Sigma^2(M))$, $\delta^g t \in C^{2,\alpha}(M;\Sigma^1(M))$, and thus the function
$|\omega|^2_g \in C^{2,\alpha}(M)$ is bounded.  The generalized maximum principle (Theorem 3.5 of \cite{GrahamLee}) applies 
since $g$ is $C^{1,1}$ conformally compact and $|\omega|^2_g$ is bounded and $C^2$ in $M$, and we deduce $|\omega|^2_g = 0$ exactly as in the proof of Lemma 2.2.  But $\omega = 0$ implies
\[ \Rc(g) + ng = 0, \]
as required. 
\end{proof}

\end{document}